\newtheorem{prop}{Proposition}
\newtheorem{cor}[prop]{Corollary}
\newtheorem{lemma}[prop]{Lemma}
\theoremstyle{definition}
\newtheorem{example}{Example}
\newtheorem{define}{Definition}
\newtheorem*{notation}{Notation}
\newtheorem{claim}[prop]{Claim}
\theoremstyle{remark}
\newtheorem{rem}{Remark}
\newcommand{\pinner}{\mathbin{\mathchoice
   {\hbox{\vrule width0.6em depth0pt height0.4pt
   \vrule width0.4pt depth0pt height0.8ex}}
   {\hbox{\vrule width0.6em depth0pt height0.4pt
   \vrule width0.4pt depth0pt height0.8ex}}
   {\hbox{\kern0.14em
   \vrule width0.48em depth0pt height0.4pt
   \vrule width0.4pt depth0pt height0.6ex\kern0.14em}}
   {\hbox{\kern0.1em
   \vrule width0.39em depth0pt height0.4pt
   \vrule width0.4pt depth0pt height0.5ex\kern0.1em}}}}
\newcommand{\inner}{\pinner\,}
\newcommand{\cD}{\mathcal{D}}
\DeclareMathOperator{\Mat}{Mat}
\DeclareMathOperator{\Ann}{Ann}
\DeclareMathOperator{\Lie}{L}
\DeclareMathOperator{\Vect}{Vect}
\DeclareMathOperator{\dord}{dord}
\newcommand{\gothg}{\mathfrak{g}}
\newcommand{\cA}{{\EuScript A}}
\newcommand{\cB}{{\EuScript B}}
\newcommand{\cE}{\mathcal{E}}
\newcommand{\cEinf}{\mathcal{E}^{\infty}}
\newcommand{\cC}{\mathcal{C}}
\newcommand{\veps}{\varepsilon}
\newcommand{\BBR}{\mathbb{R}}
\newcommand{\BBC}{\mathbb{C}}
\newcommand{\BBZ}{\mathbb{Z}}
\newcommand{\bu}{\boldsymbol{u}}
\newcommand{\bv}{{\boldsymbol{v}}}
\newcommand{\bw}{{\boldsymbol{w}}}
\newcommand{\dd}{\partial}
\newcommand{\ddb}{\boldsymbol{\partial}}
\newcommand{\Id}{{\mathrm d}}
\newcommand{\fg}{\mathfrak{g}}
\newcommand{\gl}{\mathfrak{gl}}
\newcommand{\gsl}{\mathfrak{sl}}
\newcommand{\fgl}{\mathfrak{sl}}
\newcommand{\bbl}{\underline{\boldsymbol{[}}}
\newcommand{\bbr}{\underline{\boldsymbol{]}}}
\newcommand{\fnl}{[}
\newcommand{\fnr}{]^{\text{FN}}}
\newcommand{\bone}{{\bar{1}}}
\newcommand{\bzero}{{\bar{0}}}
\newcommand{\parity}{\mathsf{p}}
\newcommand{\ii}{\boldsymbol{i}}
\newcommand{\ao}{\alpha^{\bone}}
\newcommand{\az}{\alpha^{\bzero}}
\newcommand{\bo}{\beta^{\bone}}
\newcommand{\bz}{\beta^{\bzero}}
\title[On the (non)\/removability of spectral parameters in
$\mathbb{Z}_2$-\/graded~ZCR]
{On the (non)\/removability of spectral parameters\\ 
in $\mathbb{Z}_2$-\/graded zero\/-\/curvature representations\\
and its applications}
\author[A.~V.~Kiselev]{Arthemy V.~Kiselev${}^{*}$}
\thanks{${}^{*}$%
  \textit{Address}:
  Bernoulli Institute for Mathematics, Computer Science and Artificial Intelligence,
  University of Groningen,
  P.O.Box~407, 9700\,AK Groningen, The Netherlands.\\
  \textit{E-mail}: \texttt{A.V.Kiselev\symbol{"40}rug.nl}}
\thanks{
  \textit{Current address}:
  Institut des Hautes \'Etudes Scientifiques (IH\'ES),
  Le Bois--Marie 35, route de Chartres, Bures-sur-Yvette, F-91440 France}
\author[A.~O.~Krutov]{Andrey O.~Krutov$^{\dag}$}  %
\thanks{${}^{\dag}$%
  Author to whom correspondence should be addressed.\quad  
  \textit{Address}: %
  Institute of Mathematics,
  Polish Academy of Science,
  ul. \'{S}niadeckich 8, 00-656 Warsaw, Poland.
\quad \textit{E-mail}:
 \texttt{a.krutov\symbol{"40}impan.pl}}
\thanks{\textit{Present address}: %
Independent University of Moscow, Bol.\ %
Vlasyevskiy per.~11, Moscow 119002 Russia.%
}
\date{21 November 2016, revised 15 April 2018, in final form 15 June 2018}
\subjclass[2010]
{
35Q53, %
37K25, %
58J72, %
58A50%
}
\keywords{Zero-\/curvature representation, spectral parameter, removability,
supersymmetry, Korteweg--\/de Vries equation, 
Gardner's deformation, Fr\"olicher--\/Nijenhuis bracket}
\begin{document}
\begin{abstract}
We generalise to the $\mathbb{Z}_2$-\/graded set-\/up a practical %
method for inspecting the (non)\/removability of %
parameters in zero-\/curvature representations for partial differential equations (PDEs) under the action of smooth families of gauge transformations.
We illustrate the generation and elimination of parameters in %
the flat structures over $\mathbb{Z}_2$-\/graded PDEs
by analysing the link between deformation of zero-\/curvature representations %
via infinitesimal gauge transformations and, on the other hand, 
propagation of linear coverings over PDEs %
using the Fr\"olicher--\/Nijenhuis
bracket.
\end{abstract} 
\maketitle
\subsection*{Introduction}
The aim of the present paper is to provide a tool for inspection of (non)re\-mo\-va\-bi\-li\-ty of
parameters in matrix Lie super-\/algebra valued zero-\/curvature representation for $\BBZ_2$-\/graded
partial differential equations, e.g., for supersymmetric equations of Korteweg--\/de Vries (KdV) type.  This
work concludes the cycle of papers~\cite{MathieuNew,HKKW,JMP2012} in which the Gardner deformation
problem~\cite{Miura68,Gardner} for the triplet $a\in\{-2,1,4\}$ of P.~Mathieu's $N=2$ supersymmetric KdV
equations~\cite{MathieuNew,MathieuTwo} was addressed by using the various geometric, analytic, and computational
techniques. In the paper~\cite{HKKW} (%
joint with Hussin and Wolf) 
we presented our first solution of the Open Problem~2
from~\cite{MathieuOpen}, constructing the hierarchy of Hamiltonian super\/-\/functionals for the $N=2$
super-\/KdV with~$a=4$ on the basis of a classical Gardner's deformation for the Kaup\/--\/Boussinesq
hierarchy of bosonic limit of that super-\/system. Equally well applicable to many other supersymmetric or
just $\BBZ_2$-\/graded completely integrable systems, this line of reasoning was furthered
in~\cite{Cyprus2014}.  The alternative solution of Gardner's deformation problem, which we proposed
in~\cite{JMP2012}, see also~\cite[\S~5.3]{AOKThesis}, allows to address that problem from a different perspective, namely, by putting
it in the context of finding Lie
super\/-\/algebra valued zero\/-\/curvature representations for PDEs or, in broader terms, finding the
parametric families of non\/-\/local structures over such systems
(see~\cite{WahlquistEstabrook75,WahlquistEstabrook76}, also~\cite{KuperIrish,Fordy83,TMPh2006}). The key idea
is that the \emph{deformation} parameter, the presence of which yields %
an explicit recurrence relations
between the hierarchy of integrals of motion for the PDE under study, is an avatar of the \emph{spectral}
parameter in the inverse scattering problem~\cite{Faddeev,ZSh}. This correspondence, itself referring to the
task of switching between the different realisations of Lie
algebras~\cite{Olver1992,PopovychBoykoNesterenko2003,Shchepochkina2006,RoelofsThesis,Nesterenko2014}, necessitates the inspection of
relevance vs removability of the parameters in %
families of non\/-\/local structures.

We resolve this issue of (non)\/removability by approaching the problem from two different
directions. First, we formulate a ready\/-\/to\/-\/use procedure for inspecting the (non)\/removability of
parameters in Lie super\/-\/algebra valued zero\/-\/curvature representations for~PDEs under the action of
smooth families of gauge transformations.\footnote{Zero\/-\/curvature representations %
for partial differential equations %
are the input data for realisation of the inverse scattering method.
For PDEs with unknown functions in two independent variables, %
the most interesting zero\/-\/curvature representations are those which contain a non\/-\/removable spectral parameter; 
in that case the system of~PDEs %
can be kinematically integrable~\cite{Faddeev,ZSh}.}
This practical algorithm is the $\BBZ_2$-\/graded generalisation of
Marvan's technique~\cite{Marvan2002,Marvan2010}, which was designed initially for purely bosonic systems;
a technique for solution of the same problem was developed independently by Sakovich in~\cite{Sakovich95,Sakovich2004}.  
Extending %
Marvan's %
approach %
in section~\ref{GradedMarvan} %
to, %
e.g., %
the geometry of Mathieu's $N=2$ supersymmetric Korteweg\/--\/de Vries equations~\cite{MathieuNew},
we now prove for example that one cannot remove in that way the %
parameter which is contained in the structure found by Das \textit{et al.} in~\cite{Das} and which was
used later in~\cite{JMP2012} within our %
alternative solution to Mathieu's problem of deforming his $N=2$, $a=4$ super\/-\/KdV equation.

Another powerful technique for generating the parametric families of non-\/local structures over PDEs is
based on the use of Fr\"olicher\/--\/Nijenhuis bracket.  This approach was elaborated %
by Krasil'shchik \textit{et al.}  in~\cite{KKIgonin2003,JKIgonin2002,JKFlatCon}, cf.~\cite{AVKVestnikMGU2002};
basic facts and ideas from this formalism are recalled in section~\ref{GradedFN} (see also
sec.~\ref{Notation}).
In this geometric context, we explore the nature %
and properties of the parameters found by Gardner~\cite{Miura68} for the
classical Korteweg--\/de Vries equation 
(see Example~\ref{exThiune} on p.\,\pageref{exThiune})  
and by Das \textit{et al.}~\cite{Das%
} for the $N{=}2$ supersymmetric $a{=}4$
Korteweg\/--\/de Vries equation (see %
p.\,\pageref{exThiuneN2}).
Furthermore, using the results described in section~\ref{GradedFN} and techniques from~\cite{BaranMarvan2006},
we prove the integrability of new fifth order $N{=}1$ supersymmetric evolution equation~\eqref{eq5ordF} found by Tian and Liu
(see~\cite{TianLiu5ord,TianWang2016}): 
in Example~\ref{Ex98} on p.~\pageref{Ex98} we construct its~$\gsl(9|8)$-valued zero\/-\/curvature representation with nonremovable parameter.

The main instrument of our study is the notion of prolongation structures over partial differential equations; this concept goes back to Wahlquist and Estabrook~\cite{WahlquistEstabrook75,WahlquistEstabrook76,DoddFordy83,DoddFordy84,Kaup80}.
Let $x^1$,\ $\ldots$,\ $x^n$ be the independent variables in a given PDE~$\cE$ and ${\bar{D}}_{x^i}$~be the restrictions of the respective total derivatives 
to (the infinite prolongation~$\cE^\infty$ of) the equation~$\cE$.
The prolongation structure over~$\cE^\infty$ is described by the Maurer\/--\/Cartan equation that holds restricted to~$\cE^\infty$ (which is denoted by~$\doteq$), %
\[
[\bar{D}_{x^i}+A_i, \bar{D}_{x^j}+A_j]\doteq0, \qquad 1\leqslant i<j\leqslant n. 
\]
In the case of zero\/-\/curvature representations, the objects %
$A_i$ and $A_j$ are Lie (super-) %
algebra valued
functions on (the infinite prolongation~$\cE^\infty$ of) the equation~$\cE$.
In the case of linear coverings, the objects $A_i$ and $A_j$ are vertical vector fields along the fibres~$W$ in a (vector) bundle over~$\cE^\infty$.
This establishes the correspondence between zero\/-\/curvature representations and linear coverings. Indeed, each zero\/-\/curvature
representation with coefficients belonging to a %
Lie (super-)\/algebra 
determines a linear covering, whereas each covering with %
fibre~$W$ can be regarded as a zero\/-\/curvature representation 
the coefficients of which take values in the Lie algebra of vector fields 
on~$W$.
This correspondence %
very often allows one to transfer the
results from one geometry to the other. 
Lemma~\ref{lemmaCommute} and
Proposition~\ref{propFNMarvan} in section~\ref{GradedFN} %
confirm this general principle; similar results
were considered in~\cite{KKIgonin2003}.

Finally, we analyse the link between the two deformation methods in the case of
$\BBZ_2$-\/graded~PDEs. %
In particular, in section~\ref{secZCRCovering}
we illustrate that link %
by switching between the realisations of Lie super\/-\/algebras in zero\/-\/curvature representations for Mathieu's $N=2$, $a=4$ super\/-\/KdV and other integrable systems of Korteweg\/--\/de Vries type.\\[1pt]
\centerline{\rule{1in}{0.7pt}}

\noindent%
This paper is structured as follows. 
First, in section~\ref{Notation} we recall basic facts from the local and non\/-\/local geometry of $\BBZ_2$-\/graded
partial differential equations and we fix some notation.
In section~\ref{GradedMarvan} we generalise --\,to the case of
$\BBZ_2$-graded partial differential equations\,-- Marvan's approach to proving the (non)\/re\/mo\/va\/bi\/li\/ty of
parameters in zero\/-\/curvature representations. 
In section~\ref{GradedFN} we analyse the construction of families of coverings by using the Fr\"o\-li\-cher\/--\/Nijenhuis bracket.
In section~\ref{secZCRCovering} we explore a relation between
the two ways to obtain parametric families of those geometric structures, namely, 
zero\/-\/curvature representations and %
coverings. 
In conclusion on p.~\pageref{pConclusion} 
we summarise the result and list open problems. 
Two %
appendices supersede the main text; in particular, %
a technical proof is %
contained in Appendix~%
\ref{AppClaimZCRDas}.
In Appendix~\ref{appParamNotes}, which is found at
the end of this paper, we discuss the ``unconventional'' strategies for elimination of parameters~--- e.g.,
those which cannot be removed by using smooth families of gauge transformations.  For example, we then %
re\/-\/consider Sasaki's result~\cite{Sasaki79} %
in the context of%
~\cite{KKIgonin2003,JKIgonin2002,JKFlatCon}.

We refer to the books~\cite{Berezin,QFS} for definitions and basic concepts from supergeometry. Let us emphasise that in parallel with supergeometry,
many objects in the geometry of PDEs can be approached by viewing them as the spectra of given rings of functions (that is, as the ringed spaces), see~\cite{JKLStarCov,JKVerb2011}. %
Not only is the definition of supermanifold natural in this context 
(indeed, it is a topological space equipped with a sheaf of smooth
functions taking values in some Grassmann algebra) but also the construction of infinite prolongations~$\cE^\infty$ for partial differential equations~$\cE$ becomes handy; for unlike the $k$th order prolongations~$\cE^{(k)}$ the dimension of which stays finite, the loci~$\cE^\infty$ are usually infinite\/-\/dimensional (see~\cite{JKLStarCov,JKVerb2011,VinogradovCSpecI,VinogradovCSpecII,BVV}). 
Therefore, it is this algebraic formalism (cf.~\cite{Nestruev})
which now ensures %
an almost automatic extension of practical techniques within the formal geometry of PDEs to the $\BBZ_2$-\/graded, supergeometric set\/-\/up.

\section{Preliminaries: %
$\BBZ_2$-\/graded infinite jet bundles}\label{Notation}
\noindent%
In this section we recall necessary definitions from supergeometry
(we refer to~\cite{Berezin,QFS,Leites80} and~\cite{BVV,KK2000,GDE2012,Olver} 
for further details); this material is standard.

\subsection{Jet spaces}%
Let $M^n$ be an $n$-\/dimensional oriented smooth real manifold.  Let us consider two vector
bundles over the same base $M^n$, namely, $\pi^0 \colon E^{m_0+n} \to M^n$ and
$\pi^1 \colon \varSigma^{m_1+n} \to M^n$ with fibre dimensions $m_0$ and $m_1$, respectively. %
(In particular, we let $n=2$ so that the independent variables are $x^1=x$ and $x^2=t$; we have that $m_0=1$,
$m_1=0$ for the Korteweg\/--\/de Vries equation,
$m_0=2$, $m_1=0$ for the hierarchy of the Kaup\/--\/Boussinesq equation, and $m_0=2$, $m_1=2$ for the $N{=}2$
supersymmetric KdV equation, see~\cite{MathieuNew,MathieuOpen,Berezin,QFS}.)

Let $\pi^{\bone} = \Pi\pi^1$ be the odd neighbour of the vector bundle $\pi^1$ ($\Pi$ denotes the reversion of parity).  
By definition, this neighbour is the vector bundle $\pi^{\bone}\colon\Pi\varSigma^{n+m_1}\to M^n$
over the same base and with the same vector space $\BBR^{m_1}$ take as prototype for the fibres.  The
coordinates $\xi^1, \ldots, \xi^{m_1}$ along the fibres $(\pi^{\bone})^{-1}(x)\simeq \BBR^{m_1}$ over $x\in M^n$
are proclaimed $\BBZ_2$-\/parity odd and coordinates $u^1, \ldots, u^{m_0}$ along the fibre
$(\pi^{\bzero})^{-1}(x)\simeq \BBR^{m_0}$ are proclaimed $\BBZ_2$-\/parity even, i.e.\ we introduce the
$\BBZ_2$-\/grading \label{pParity}%
$\parity\colon x^i\mapsto \bzero$,\ %
${\xi^k\mapsto\bone}$,\ $u^j\mapsto\bzero$ for the generators of the ring of smooth $\BBR$-\/valued functions on
the total space $\Pi\varSigma^{m_1+n}$ of the superbundle.
We have that $C^\infty(\Pi\varSigma^{m_1+n}) \simeq
\Gamma(\bigwedge^{\bullet}(\varSigma^{m_1+n})^*)$, where $(\varSigma^{m_1+n})^*$ denotes the space of fibrewise\/-\/linear functions on $\varSigma^{m_1+n}$. 
Finally, let us construct the Whitney sum  $\pi = \pi^{\bzero}\times_{M^n}\pi^{\bone}$ of the bundles $\pi^{\bzero} =
\pi^0$ and $\pi^{\bone}$ over the base $M^n$. We put $\Gamma(\pi) =
\Gamma^{\bzero}(\pi)\oplus\Gamma^{\bone}(\pi)$, where $\Gamma^{\bzero}(\pi) = \Gamma(\pi^{\bzero})$ and
$\Gamma^{\bone}(\pi) = \Pi (\Gamma(\Pi\pi^{\bone}))$;
this construction of~$\Gamma^{\bone}(\pi)$ in~$\Gamma(\pi)$ will be referred to in the definition of Cartan's distribution~$\cC$ (see~below).

Consider the jet space $J^{\infty}(\pi)$ of sections of
the super%
bundle~$\pi$. Namely, for the superbundle~$\pi$ the infinite jet superbundle $\pi_\infty\colon J^\infty(\pi)\to
M^n$ is defined as follows: we let $(\pi_\infty)^{\bzero} = (\pi^{\bzero})_\infty$, $(\pi_\infty)^{\bone} = \Pi(
(\pi^1)_\infty)$ (see~\cite{JKLStarCov} and~\cite{Norway} for details).
The set of variables describing %
$J^{\infty}(\pi)$ is composed by
\begin{itemize}
\item even coordinates $x^i$ on $M^{n}$,
\item even coordinates $u^j$ and parity\/-\/odd coordinates~$\xi^k$ 
along the fibres of~$\pi$; these objects %
themselves are elements of the set of
\item even variables %
$u^j_{\sigma}$ and parity\/-\/odd variables~$\xi^k_{\sigma}$ for %
the fibres of the infinite jet bundle $\pi_\infty\colon J^{\infty}(\pi)\to M^n$.
\end{itemize}
In the above notation we let $\sigma$~be the multi\/-\/index that
labels partial derivatives of the unknowns $u^j$ and~$\xi^k$
w.r.t.\ the even variables $x^i$; by convention, 
$u^j_{\varnothing}\equiv u^j$ and $\xi^k_{\varnothing}\equiv \xi^k$.
The parity function $\parity$ acts \emph{only} on homogeneous elements of $C^\infty(J^\infty(\pi))$ by extending its value from the generators,
\begin{align*}
\parity(x^i) & =  \bzero, &
\parity(u^j) & =  \bzero, & \parity(\xi^k) & =  \bone, && \\
&& \parity(u^j_{\sigma}) &= \bzero, & \parity(\xi^k_{\sigma}) & = \bone, &
|\sigma|&>0.
\end{align*}
By construction, the parity satisfies the rules
\begin{align*}
\parity(a \cdot b) &= \parity(a) + \parity(b),\\
\parity(a + b)&=\parity(a) = \parity(b) \ \text{ iff } \parity(a) = \parity(b),
\end{align*}
where $a,b\in C^\infty(J^\infty(\pi))$.

Every fibrewise linear function~$f\in C^{\infty}_{\text{lin}}(J^\infty)$ can be identified naturally with a
linear differential operator $\Delta_f\colon \Gamma(\pi) \to C^{\infty}(M)$ by using the formula
$\Delta_f(s)(x) = f(j_{\infty}(s)(x))$, where $j_{\infty}(s)(x)$ is the infinite jet of a
section~$s\in\Gamma(\pi)$ at~$x\in M$.  The infinite jet bundle~$\pi_{\infty}$ admits a natural flat
connection such that the lift~$\hat{X}$ of a vector field $X$ on $M$ is uniquely defined by the condition
$\Delta_{\hat{X}(f)} = X\circ\Delta_f$ for $f\in C^\infty_{\text{lin}}(J^\infty(\pi))$. The
lifts~$D_{x^i} = \widehat{{\dd}/{\dd x^i}}$ of~${\dd}/{\dd x^i}$ are called \emph{the total derivatives} on
$J^\infty(\pi)$; at every~$i$, they are expressed by the formula
\[%
D_{x^i} = \frac{\partial}{\partial x^i} +
  \sum^{m_0}_{j=1}\sum_{\sigma_{\bzero}}
  u^j_{\sigma_{\bzero}+1_i}\frac{\partial}{\partial u^j_{\sigma_{\bzero}}}
  + \sum^{m_1}_{k=1}\sum_{\sigma_\bone}
  \xi^k_{\sigma_\bone+1_i}\frac{\vec{\partial}}{\partial \xi^k_{\sigma_\bone}},
\]%
where ${\vec{\dd}}/{\dd\xi^k}$ denotes the left derivative.
These vector fields commute (in a usual sense, even though the operators~$D_{x^i}$ contain %
directed derivations).
By definition, we put $D_\tau=D_{x^1}^{\tau_1}\circ\dots\circ D_{x^n}^{\tau_n}$. Vector fields of the form
$\hat{X}$ generate an $n$-dimensional distribution on~$J^{\infty}(\pi)$; it is called the Cartan distribution
and it is denoted by~$\cC$.

\subsection{Differential equations}
In this $\BBZ_2$-\/graded set\/-\/up, let a system~$\cE$ of partial differential equations be given. By definition, the geometric object~$\cE$ is described by (many equivalent) systems of relations between the unknowns' derivatives with respect to the $n$ independent directions along the base~$M^n$. In local coordinates we have that
\[
\cE = \left\{ F^\ell(x^i, u^j, \dots, u^j_{\sigma_\bzero}, 
\xi^k, \dots, \xi^k_{\sigma_\bone}) = 0, 
\qquad \ell=1, \ldots, r \right\}.
\]
In fact, not every object~$\cE$ determined this way would be interesting from either geometric or physical points of view. To get rid of %
irrelevant cases, from now on we consider only ($\BBZ_2$-\/graded) partial differential (super-)\/equations which are \emph{formally integrable} in the sense of Goldschmidt~\cite{GoldschmidtLin,GoldschmidtNonlin}.
Still let us emphasise that at the moment when this paper is written, the expert community has not yet reached a consensus on the proper $\BBZ_2$-\/\emph{graded} extension of Goldschmidt's classical result on integrability. It is quite paradoxical that even if such extension is almost as straightforward as the generalisation of Marvan's approach to kinematically integrable systems, that work has not yet been done. 

In view of what has been said above, we accept that each partial differential equation to study must possess
the non\/-\/empty infinite prolongation~$\cEinf$ formed by all the differential consequences\footnote{An
  obvious logical and geometric distinction between the locus~$\cEinf$ and its algebraic description by using
  the $C^\infty$-\/smooth left\/-\/hand sides in the system $D_\tau(F^\ell)=0$ is that the latter are always
  defined yet they can describe the \emph{empty set}. For instance, consider the overdetermined partial
  differential equation $\cE=\{u_{xx}=1$, $u_y=x^2\}$ for which $(u_{xx})_y=0\neq 2=(u_y)_{xx}$. Likewise, the
  equation $\cE=\{v_x=u$,\ $v_y=u\}$ can be solved only if the compatibility condition~$v_{xy}=v_{yx}$ is
  satisfied, thus $u_x=u_y$ is the constraint due to which the projection of~$\cEinf$ down to~$\cE$ is not
  surjective.}  $D_\tau(F^\ell)=0$ with $|\tau|\geqslant0$; 
the locus\footnote{%
Neither the set~$\cE\subseteq J^k(\pi)$ nor its prolongation~$\cE^\infty\subseteq J^\infty(\pi)$ may be expected to be submanifolds in the respective jet spaces. For example, consider the differential equation $\cE=\{u_x^2=u^2\}\subset J^1(\pi)$ which cuts the diagonal cross (i.e.\ already not a submanifold) in the coordinate plane $Ouu_x$ within~$J^1(\pi)$. (It is clear also that the set of solutions to the differential equation $\bigl(\tfrac{\Id}{\Id x}{\bigr|}_{x_0-0} u\bigr)^2=\bigl(u(x_0)\bigr)^2$ on~$M^1=\BBR\ni x_0$ is immense, compared with the solution sets for the equations $u_x=u$ and~$u_x=-u$.) Moreover, should there be \emph{two} independent variables, $x$ and~$t$, so that $\cE=\{u_x^2=u^2\}$ is a \emph{partial} differential equation, then it is readily seen that, parameterised by using infinitely many variables $x$,\ $t$,\ $u$,\ $u_t$,\ $u_{tt}$,\ $\ldots$,\ $u_{t\cdots t}$,\ $\ldots$, the locus~$\cE^\infty$ is not a submanifold in~$J^\infty(\pi)$ as~well.}
$\cEinf\subseteq J^\infty(\pi)$ is
required to %
project back onto~$\cE$ and onto all the lower\/-\/order jet (super-)\/spaces~--- so that the Cauchy problem
for~$\cE$ is (formally) solvable in the class of formal power series for all Cauchy data.

Strange though it may seem, the social request for a %
$\BBZ_2$-\/graded generalisation of Marvan's removability inspection method offers us the most restrictive requirement for the class of PDEs to be studied: they must possess Lie (super-)\/algebra valued zero\/-\/curvature representations~--- and even parametric families~$\alpha_\lambda$ of such structures.

\subsection{Differential forms}
Let us denote by~$\bar{D}_{x^i}$ the restrictions of total derivatives~$D_{x^i}$ to~$\cEinf\subseteq J^\infty(\pi)$. 
At every point $\theta^\infty\in\cEinf$ the tangent 
space~$T_{\theta^\infty}\cEinf$ splits in a direct sum of two subspaces. The one which is spanned by the Cartan distribution~$\cEinf$ is \emph{horizontal} and the other is \emph{vertical}:
$T_{\theta^\infty} \cEinf = \cC_{\theta^\infty} \oplus
V_{\theta^\infty} \cEinf$. 
We denote by~$\Lambda^{1,0}(\cEinf) = \Ann \cC$ 
and~$\Lambda^{0,1} (\cEinf) = \Ann V\cEinf$ 
the $C^\infty(\cEinf)$-\/modules of contact and horizontal
one\/-\/forms which vanish on~$\cC$ and~$V\cEinf$, respectively.
Denote further by~$\Lambda^r(\cEinf)$ the 
$C^{\infty}(\cEinf)$-\/module of $r$-forms on~$\cEinf$.
There is a natural decomposition $\Lambda^r(\cEinf) = \bigoplus_{q+p = r}
\Lambda^{p,q}(\cEinf)$, where $\Lambda^{p,q} (\cEinf) = \bigwedge^p \Lambda^{1,0}(\cEinf) \wedge \bigwedge^q \Lambda^{0,1}(\cEinf)$. This implies that the de Rham differential~$\bar{\Id}$ on~$\cEinf$ is subjected to the decomposition\label{pCartanDiff} 
$\bar{\Id} = \bar{\Id}_h + \bar{\Id}_{\cC}$, where $\bar{\Id}_h \colon \Lambda^{p,q}(\cEinf) \to \Lambda^{p,q+1}(\cEinf)$ is the horizontal differential and $\bar{\Id}_{\cC} \colon \Lambda^{p,q}(\cEinf) \to \Lambda^{p+1,q}(\cEinf)$ is the vertical differential.

The differential $\bar{\Id}_h$ can be expressed in coordinates by inspection of its action on elements of $C^\infty(\cEinf)=\Lambda^{0,0}(\cEinf)$:
for any function~$\phi$ we have that
\[%
  \bar{\Id}_h \phi = {} %
\sum_{i=1}^{n} \Id x^i \wedge \bar{D}_{x^i}(\phi),%
\qquad
  \bar{\Id}_{\cC} \phi = {} %
\sum_{j=1}^{m_0}\sum_{\sigma_\bzero} 
\omega^j_{\sigma_\bzero} \wedge \frac{\partial 
    \phi}{\partial u^j_{\sigma_\bzero}} 
+ \sum_{k=1}^{m_1}\sum_{\sigma_\bone} \zeta^k_{\sigma_\bone} 
\wedge \frac{\vec{\partial} \phi}{\partial \xi^j_{\sigma_\bone}},
\]%
where we put
\begin{align*}
\omega^j_{\sigma_\bzero} &= \Id u^j_{\sigma_\bzero} -
\sum_{j=1}^{n} u^j_{\sigma_\bzero+1_i}\,\Id x^i, & 
\zeta^j_{\sigma_\bone} &= \Id\xi^j_{\sigma_\bone} -
\sum_{i=1}^{n} \xi^j_{\sigma_\bone+1_i}\, \Id x^i.
\end{align*}
The horizontal differential $\bar{\Id}_h$ acts on the spaces $\Lambda^{p,q}(\cE^\infty)$ of differential forms via the graded Leibniz rule; its application to Cartan's forms $\bar{\Id}_{\cC}(u^j_\sigma)$ is deduced from the identity~$\bar{\Id}^2=0$ for the de Rham differential $\bar{\Id}=\bar{\Id}_h+\bar{\Id}_{\cC}$ on~$\cE^\infty$. Specifically, from $\bar{\Id}_h^2=\bar{\Id}_h\circ\bar{\Id}_{\cC}+\bar{\Id}_{\cC}\circ\bar{\Id}_h=\bar{\Id}_{\cC}^2=0$ one infers that 
$\bar{\Id}_h\circ\bar{\Id}_{\cC}=-\bar{\Id}_{\cC}\circ\bar{\Id}_h$, thus reducing the action of~$\bar{\Id}_h$ to the case where it has already been defined. %
The formula
$\bar{\Id}_h = \sum\nolimits_i \Id x^i \wedge {\bar{D}}_{x^i}$
now means that the vector fields $\bar{D}_{x^i}$ proceed by the Leibniz rule over the argument's wedge factors,
acting on each factor --\,pushed leftmost\,-- via the Lie derivative.

We note further that $\Id x^i$, $\Id u^j_{\sigma_\bzero}$, %
and $\Id \xi^k_{\sigma_\bone}$ %
satisfy the following commutation relations:
\begin{align*}
\Id x^i \wedge \Id x^j &{} = - \Id x^j \wedge \Id x^i, &
\Id x^i \wedge \Id u^j_{\sigma_\bzero} &{}= - \Id u^j_{\sigma_\bzero} \wedge \Id x^i, &
\Id x^i \wedge \Id \xi^k_{\sigma_\bone} &{}= -\Id \xi^k_{\sigma_\bone} \wedge \Id x^i, 
\\
\Id u^j_{\sigma_\bzero} \wedge \Id u^k_{\tau_\bzero} &{} = - \Id u^k_{\tau_\bzero} \wedge \Id u^j_{\sigma_\bzero}, &
\Id \xi^k_{\sigma_\bone} \wedge \Id u^j_{\tau_\bzero} &{}= -\Id u^j_{\tau_\bzero} \wedge \Id\xi^k_{\sigma_\bone}, &
\Id \xi^k_{\sigma_\bone} \wedge \Id \xi^j_{\tau_\bone} &{}= +\Id \xi^j_{\tau_\bone} \wedge \Id\xi^k_{\sigma_\bone};
\end{align*}
we refer to~\cite{gvbv,arXiv12100726%
} for the geometric theory of variations in the frames of which
one discovers why differential one\/-\/forms should anticommute in the $\mathbb{Z}$-\/graded sense.

The \emph{substitution} of a $\BBZ_2$-\/graded vector field~$X$ into 
a $\BBZ_2$-\/graded differential form~$\omega$ 
is defined by the formula $\mathrm{i}_X(\omega)=(-1)^{\parity(X)\cdot\parity(\omega)}\omega(X)$, provided that~$X$ and~$\omega$ are both homogeneous with respect to the $\BBZ_2$-\/grading. We have that
\[
\mathrm{i}_{\bar{D}_{x^i}}(\omega^j_{\sigma_\bzero}) = \mathrm{i}_{\bar{D}_{x^i}}(\zeta^k_{\sigma_\bone}) =0\qquad \text{for all $i,j,k$ and~$|\sigma|\geqslant0$}.
\]
These equalities mean that the Cartan distribution can be described equivalently 
in terms of the Cartan forms~$\omega^j_{\sigma_\bzero}$ and~$\zeta^k_{\sigma_\bone}$.

\subsection{Coverings over differentail equations} 
The restriction of Cartan's distribution from~$J^\infty(\pi)$ onto~$\cEinf$ 
is horizontal with respect to the projection
$\pi_{\infty}{\bigr|}_{\cEinf}\colon\cEinf\to M^n$. This determines the connection
$\cC_{\cEinf}\colon \Gamma (TM^n)\to\Gamma (T\cEinf)$, where $\Gamma (TM^n)$ and
$\Gamma (T\cEinf)$ are the $C^{\infty}(M^n)$-{} and $C^{\infty}(\cEinf)$-\/modules 
of vector fields on $M^n$ and $\cEinf$, %
respectively. 
We denote by $\Gamma T(\Lambda^1 (\cEinf))$ the
$C^{\infty}(\cEinf)$-\/module of derivations $C^{\infty}(\cEinf) \to \Lambda^1(\cEinf)$
taking values in the $C^{\infty}(\cEinf)$-\/module of one\/-\/forms~on~$\cEinf$.
The connection form $U_{\cEinf} \in \Gamma T(\Lambda^1
(\cEinf))$ of %
$\cC_{\cEinf}$ is called the \emph{structural element} of the equation~$\cEinf$, see~\eqref{EqConForm} on p.~\pageref{EqConForm}.

\begin{define}[\cite{BVV,JKAMNonlocalTrends}]\label{defCovering}
A \emph{covering} (or \emph{differential covering}) over a given partial differential (super-)\/equation $\cE$ is another (usually,
larger) system of partial differential equations $\tilde{\cE}$
endowed with the $n$-\/dimensional Cartan distribution $\tilde{\cC}$
and such that there is a mapping $\tau\colon\tilde{\cE}\to\cEinf$ for which at each point
$\theta\in\tilde{\cE}$, the tangent map $\tau_{*,\theta}$ is an isomorphism of the plane
$\tilde{\cC}_{\theta}$ to the Cartan plane $\cC_{\tau(\theta)}$ at the point~$\tau(\theta)$ in $\cEinf$.
\end{define}

The construction of a covering over $\cE$ means the
introduction of new variables such that their compatibility
conditions lie inside the initial system~$\cEinf$.
In practice (see~\cite{GDE2012} and references therein), it is the rules to
differentiate the new variable(s) which are specified in a consistent
way; this implies that those new variables acquire the nature of
nonlocalities if their derivatives are local but the variables
themselves are not (e.g., consider the potential $\mathfrak{v}=\int
u\,\Id x$ satisfying $\mathfrak{v}_x = u$ and $\mathfrak{v}_t = -u_{xx} -
3u^2$ for the KdV equation $u_t + u_{xxx} + 6uu_x = 0$).
Whenever the covering $\tau\colon\tilde{\cE}\to\cE$ is %
realised as a %
fibre bundle, the forgetful map~$\tau$ discards the nonlocalities.

\section{(Non)\/removability of %
parameters in $\BBZ_2$-\/graded zero\/-\/curvature representations}\label{GradedMarvan}
\noindent%
In this section we describe an %
algorithm for inspection of (non)removability of %
parameters in zero\/-\/curvature representations under the action of smooth families of gauge transformations. This technique (and its domain of applicability), which we formulate here for the $\BBZ_2$-\/graded set\/-\/up of super\/-\/equations and Lie super\/-\/algebras, patterns upon M.~Marvan's approach for the purely bosonic case~\cite{Marvan2002,Marvan2010}.
We recall that the latter works under the assumption of local analyticity for all the objects and structures involved. We now formulate the most essential half of Marvan's \emph{criterion} of (non)\/removability; to this end, we explicitly postulate that the admissible families of gauge transformations depend on the parameter in a smooth way. This covers the situations one typically encounters %
in mathematical physics; the case of smooth families of zero\/-\/curvature representations such that the parameter contained in them is removed by using the families of gauge transformations that are \emph{not smooth} is henceforth put aside.

\begin{rem}
Some agreement on the smoothness class of gauge transformations is always built into the concept of principal fibre bundles and gauge connection one\/-\/forms (e.g., those forms which satisfy the Maurer\/--\/Cartan flatness equation).
Even though the transformations of the wave function~$\Psi$ by elements~$S$ of the structure Lie {(super-)} %
group~$G$ are defined pointwise over the base manifold~$M^n\ni\boldsymbol{x}$ and therefore, they can be performed discontinuously with respect to the points~$\boldsymbol{x}$, this extent of generality is usually not the case~--- indeed, the gauge set\/-\/up is studied only under much more restrictive postulates. In particular, the introduction of gauge connection one\/-\/forms requires that both the wave function~$\Psi$ and gauge transformations~$S$ be {(piecewise-)} %
continuously differentiable.

Whenever the principal fibre bundles are towered over partial differential equations --\,in the context of Lie (super-)\/algebra $\fg$-\/valued zero\/-\/curvature representations~$\alpha$ and the inverse scattering\,-- the smoothness classes of such structures and their gauge transformations are determined from the smoothness classes of equations' solutions~$u^i=s^i(\boldsymbol{x})$ in the course of restriction of all the objects at hand to the jets~$j_\infty(s^i)(\boldsymbol{x})$ of solutions; the object~$\bar{\Id}_h S$ in~\eqref{gaugetrans} below would be undefined otherwise.

In conclusion, the choice of families~$S(\boldsymbol{x})\in G$ of gauge transformations for the fields~$\alpha %
\in\fg\mathbin{{\otimes}_{\Bbbk}}\Lambda^1(M^n)$ at~$\boldsymbol{x}\in M^n$ always refers, either explicitly or tacitly, to some \textit{ad hoc} assumptions on these families' and fields' smoothness. Our technical agreement that the families~$S_\lambda$ and~$\alpha_\lambda$ of \emph{such} structures both depend smoothly on a given parameter~$\lambda\in\mathcal{I}\subseteq\BBC$ fits into the general picture.
\end{rem}

Let $\cE$ be a partial differential (super-)\/equation whose infinite prolongation~$\cE^\infty$ is contained in the infinite jet (super-)\/bundle~$J^\infty(\pi)$; we refer to section~\ref{Notation} for a recollection of concepts and structures that arise in the geometry of jet (super-)\/spaces (e.g., we refer to that %
preliminaries chapter for the notions of the space $\bar{\Lambda}(\cEinf)$ of horizontal differential forms on~$\cEinf$ and the horizontal differential~$\bar{\Id}_h\colon\bar{\Lambda}(\cEinf)\to\bar{\Lambda}(\cEinf)$).

Let $G$ be a finite-dimensional matrix Lie supergroup (i.e., $G$ is a Lie supersubgroup of $GL(k_0| k_1)$
for certain non-negative integers $k_0$ and $k_1$). Let $\fg$ be its (matrix) Lie superalgebra.  Consider its %
tensor product $\fg\mathbin{{\otimes}_{\mathbb{R}}}\bar{\Lambda}(\cEinf)$ 
with the exterior algebra
$\bar{\Lambda}(\cEinf)=\bigoplus_i \Lambda^{0,i}(\cEinf)$;
by definition, elements of $\fg\otimes C^\infty(\cEinf)$ are called
\emph{$\fg$-\/\textup{(\emph{super})}\/matrices}~\cite{Marvan2002}.

The product is endowed with the bracket (see p.~\pageref{pParity} for definition of the parity function~$\parity$)
 \[\bbl A\otimes\mu, B\otimes\nu \bbr =
 (-1)^{\parity(B)\parity(\mu)}\bbl A,B\bbr\otimes\mu\wedge\nu\] 
for
$\mu,\nu\in\bar{\Lambda}(\cEinf)$ and $A,B\in \fg$.
Define the operator $\bar{\Id}_h$ that acts on elements of $\fg\otimes\bar{\Lambda}(\cEinf)$ by the rule 
\[
\bar{\Id}_h(A\otimes\mu) = A\otimes\bar{\Id}_h\mu,
\]
where $\bar{\Id}_h$ in the right\/-\/hand side is the horizontal differential.
The tensor product ${\fg\otimes\bar{\Lambda}(\cEinf)}$ is a
differential graded associative
algebra with respect to the multiplication $(A\otimes\mu)\cdot (B\otimes\nu) =(-1)^{\parity(B)\parity(\mu)} (A\cdot
B)\otimes\mu\wedge\nu$ induced by the ordinary matrix multiplication
so that
\begin{align*}
  \bbl \rho,\sigma \bbr &= \rho\cdot\sigma -
    (-1)^{rs}(-1)^{\parity(\rho)\parity(\sigma)}\sigma\cdot\rho,\\
  \bar{\Id}_h(\rho\cdot\sigma) &= \bar{\Id}_h\rho\cdot\sigma + (-1)^r\rho\cdot\bar{\Id}_h\sigma
\end{align*}
for $\rho\in\fg\otimes\bar{\Lambda}^r(\cEinf)$ and
$\sigma\in\fg\otimes\bar{\Lambda}^s(\cEinf)$.

\begin{define}[\cite{Marvan2002,Marvan2010,Manin84}]
A horizontal $1$-\/form
$\alpha\in\fg\otimes\bar{\Lambda}^1(\cEinf)$ is
called a $\fg$-\/valued \emph{zero\/-\/curvature representation} for
the equation~$\cE$ if the Maurer\/--\/Cartan condition,
\begin{equation}\label{zcrf}
\bar{\Id}_h\alpha \doteq \tfrac12\bbl \alpha,\alpha \bbr,
\end{equation}
holds whenever both sides are restricted to~$\cE$ and its differential consequences
(such restriction is denoted by~$\doteq$).
\end{define}

Recall that $\fg$ is the Lie superalgebra of a given Lie supergroup~$G$.
Elements of the (pre-)\/sheaf~$C^{\infty}(\cEinf, G)$ of $G$-valued functions on the equation~$\cEinf$ are
called \emph{$G$-\/matrices}. One can represent elements of $C^{\infty}(\cEinf, G)$ as block matrices
$\left(\begin{smallmatrix} A & B \\ C & D \end{smallmatrix}\right)$,
where $A$~is a $(k_0\times k_0)$-\/size matrix whose entries are even
elements of~$C^{\infty}(\cEinf)$, 
$B$~is a $(k_0\times k_1)$-\/size matrix whose entries are odd elements
of~$C^{\infty}(\cEinf)$, 
$C$~is a $(k_1\times k_0)$-\/size matrix whose entries are odd elements
of~$C^{\infty}(\cEinf)$, 
and $D$~is a $(k_1\times k_1)$-\/size matrix whose entries are even elements
of~$C^{\infty}(\cEinf)$.

\begin{define}\label{defNonRem}
Let $\alpha$ and $\alpha^\prime$ be $\fg$-valued zero\/-\/curvature representations. Then $\alpha$ and $\alpha^\prime$ are
called \emph{gauge\/-\/equivalent} if there exists $S\in C^{\infty}(\cEinf, G)$ such that 
\begin{equation}\label{gaugetrans}
  \alpha^{\prime}  = \bar{\Id}_hS\cdot S^{-1} + S\cdot\alpha\cdot S^{-1}
\mathrel{{=}{:}} \alpha^S. %
\end{equation}
Let $\alpha_{\lambda}$ be a family of zero\/-\/curvature representations smoothly depending on a %
parameter~$\lambda\in\mathcal{I}\subseteq\BBR$ (or $\subseteq\BBC$), 
where $\mathcal{I}$ is a connected subset. %
The parameter $\lambda$ is \emph{removable} 
under the action of a smooth family of gauge transformations
if there exists $\lambda_0\in\mathcal{I}$ and there is a family of
gauge transformation $S_\lambda$ smoothly depending on $\lambda$ such that 
$\alpha_{\lambda}  = \alpha_{\lambda_0}^{S_\lambda}$ for all $\lambda\in\mathcal{I}$.
\end{define}

\begin{rem}
There are other approaches to the idea of parameters' (non)\/removability, e.g.,
under transformations which are not necessarily gauge 
(this is in contrast to the above definition). 
It turns out that a given parameter in a family of zero\/-\/curvature
representations can be nonremovable with respect to the %
class of smooth gauge transformations
but, at the same time, it can be eliminated by using transformations from a wider group.
For example, 
Sasaki showed in~\cite{Sasaki79} that the parameter in the standard Lax pair for
the Korteweg\/--\/de Vries equation 
cannot be gauged out but it can be {\it eliminated} by using the scaling symmetry of~KdV
(see Appendix~\ref{appParamNotes} in this paper).
We stress that Sasaki's %
transformation is not gauge and therefore it acts across the 
gauge group orbits;
that parameter is \emph{non\/-\/removable} in the sense of Definition~\ref{defNonRem}
because there is no smooth family of gauge transformation which would remove it.  
We refer to~\cite{Marvan2010} for a discussion
about parameters that can be removed by gauge transformation %
depending on the parameter~$\lambda$ not (only) in a smooth way.
We note that in the most interesting examples the nonremovable parameter could be eliminated,
see Appendix~\ref{appParamNotes} and~\cite{JKIgonin2002} for examples.
\end{rem}

\begin{prop}[cf.\ \cite{Marvan2002}]\label{PropNonRemGrad}
Let $\fg$~be a complex matrix Lie \textup{(}super-\textup{)}\/algebra.
  For a connected subset $\mathcal{I}\subseteq\BBC$, consider a family~$\alpha_{\lambda}$, depending smoothly
  on a parameter~$\lambda\in\mathcal{I}$, of $\fg$-valued zero\/-\/curvature representations for an
  equation~$\cE$. If for each $\lambda\in\mathcal{I}$ there is a $\fg$-\/matrix~$Q_{\lambda}$ such that its
  parity is $\parity(Q_\lambda)=\bzero$ and
\begin{equation}\label{EqToSolve}
  \frac{\partial}{\partial \lambda}\alpha_{\lambda} =
  \bar{\Id}_h Q_{\lambda} - \bbl \alpha_{\lambda}, Q_{\lambda}\bbr,
\end{equation}
then the parameter~$\lambda$ is removable under the action of a %
smooth family of gauge transformations.
\end{prop}

\begin{proof}
Suppose %
that 
$\dot{\alpha}_{\lambda}=\bar{\Id}_hQ_{\lambda} - \bbl {\alpha_\lambda}, Q_{\lambda} \bbr$ for some
$Q_{\lambda}\in\fg\otimes C^{\infty}(\cEinf)$. To begin with, fix a constant $\lambda_0\in\mathcal{I}$. Let
$S_{\lambda}\in C^\infty(\cE^\infty,G)$ be a solution of the matrix equation%
\footnote{We recall that $\fg$~is the matrix Lie (super-)\/algebra
of a given matrix Lie (super-)\/group~$G$, whence the multiplication $Q_{\lambda}\cdot S_{\lambda}$ is induced by the
ordinary multiplication of (super-)matrices.}
$\partial S_\lambda/\partial\lambda = Q_{\lambda}\cdot S_{\lambda}$ with the initial datum
$S_{\lambda_0} = E$, where $E$ is the identity matrix $\left(\begin{smallmatrix} \boldsymbol{1} & 0 \\ 0 &
  \boldsymbol{1} \end{smallmatrix}\right) \in \Mat(k_0| k_1)$. 
Consider the expression $Z_{\lambda}= \bar{\Id}_h S_\lambda + S_{\lambda} \alpha_{\lambda_0} -
\alpha_{\lambda} S_{\lambda} = (\alpha_{\lambda_0}^{S_{\lambda}} - \alpha_\lambda) S_\lambda$.  We have that
\begin{align*}
\frac{\partial}{\partial\lambda}Z_\lambda & =
   \frac{\partial}{\partial\lambda} ( \bar{\Id}_h S_\lambda +
S_{\lambda} \alpha_{\lambda_0} - \alpha_{\lambda} S_{\lambda} )\\
{} & =  \bar{\Id}_h(\dot{S}_\lambda) + \dot{S}_{\lambda}\alpha_{\lambda_0} -
  \dot{\alpha}_\lambda S_\lambda - \alpha_\lambda \dot{S}_\lambda\\
{} & = \bar{\Id}_h(Q_\lambda S_\lambda) + Q_\lambda S_\lambda \alpha_{\lambda_0}
  - \dot{\alpha}_\lambda S_\lambda - \alpha_\lambda Q_\lambda
  S_\lambda = \\
{} & = \bar{\Id}_h Q_\lambda S_\lambda + Q_\lambda \bar{\Id}_h
  S_\lambda  + Q_\lambda S_\lambda \alpha_{\lambda_0}
  - \dot{\alpha}_\lambda S_\lambda 
  - \alpha_\lambda Q_\lambda S_\lambda
  + \left( Q_\lambda\alpha_\lambda S_\lambda
  - Q_\lambda\alpha_\lambda S_\lambda \right)
 \\
{} & = (\underbrace{\bar{\Id}_h Q_\lambda - \alpha_\lambda Q_\lambda +
  Q_\lambda 
  \alpha_\lambda}_{%
  \dot{\alpha}_\lambda}  - \dot{\alpha}_\lambda)S_\lambda + Q_\lambda
( \bar{\Id}_h 
  S_\lambda + S_\lambda \alpha_{\lambda_0} - \alpha_\lambda S_\lambda)
  \\
{} & = Q_\lambda Z_\lambda.
\end{align*}
It is obvious that $Z_{\lambda_0} = 0$, whence
$\alpha_{\lambda_0}^{S_\lambda} - \alpha_\lambda = 0$.
Therefore, %
the parameter~$\lambda$ is removable
by using the explicitly given family~$S_\lambda$ of gauge transformations.
\end{proof}

The %
above proposition and its proof for \emph{parity\/-\/even} $\fg$-\/matrices~$Q_\lambda$ are a %
straightforward %
$\BBZ_2$-\/graded generalisation of
Marvan's original idea %
for %
non\/-\/graded PDE systems%
~\cite{Marvan2002,Marvan2010};
the commutator $[\cdot,\cdot]$ in a Lie algebra is now replaced %
by the graded commutator $\bbl \cdot, \cdot \bbr$ in the Lie superalgebra.
Note that whenever $\parity(Q_\lambda)=\bzero$,
the parity $\parity(Q_\lambda)+\parity(S_\lambda)$ of $\dot{S}_\lambda$
in the right\/-\/hand side of the equation $\dot{S}_\lambda=Q_\lambda S_\lambda$
is the same as that of $S_{\lambda}$,
which agrees with~$\parity(\lambda)=\bzero$.

\begin{rem}\label{RemAlmostConverse}
  It is readily seen now \emph{what} could obstruct the converse to be true, which would otherwise convert
  Proposition~\ref{PropNonRemGrad} into the criterion of (non)\/removability. Unfortunately, the
  family~$S_\lambda$ of gauge transformations may not necessarily be \mbox{(piecewise-)}\/smooth in~$\lambda$ even if
  the family~$\alpha_\lambda$~is.

Suppose still that the parameter~$\lambda$ in~$\alpha_\lambda$ is removable by a smooth family of $G$-matrices~$S_\lambda$ (so that the derivatives~$\dot{S}_\lambda$ with respect to~$\lambda$ are well defined).
This means that for any fixed $\lambda_0$ there exists a $G$-\/matrix $S_{\lambda}$
such that $\alpha^{S_{\lambda}}_{\lambda_0} = \alpha_{\lambda}$ with
$S_{\lambda_0} = E\in G$, which is viewed here as the set of constant $G$-valued functions on~$\cEinf$.
The matrix $\dot{S}_{\lambda_0} = \partial/\partial\lambda |_{\lambda=\lambda_0} S_{\lambda}$ belongs to the
tangent space at the unit of~$G$, i.e.\ to the matrix Lie
(super-)\/algebra~$\fg$. %
We have that
\begin{align*}
0 &=
    \left.\frac{\partial}{\partial\lambda}\right|_{\lambda=\lambda_0}
      \alpha_{\lambda_0} =
    \left.\frac{\partial}{\partial\lambda}\right|_{\lambda=\lambda_0}
      \alpha^{S^{-1}_{\lambda}}_{\lambda} =
    \left.\frac{\partial}{\partial\lambda}\right|_{\lambda=\lambda_0}
      \left(\bar{\Id}_h (S^{-1}_{\lambda})S_\lambda + S_{\lambda}^{-1}\alpha_\lambda
      S_\lambda\right)= {}
\\
{}&=\left.\frac{\partial}{\partial\lambda}\right|_{\lambda=\lambda_0}
      \left(
        -S^{-1}_{\lambda}\bar{\Id}_h S_{\lambda} +
        S_{\lambda}^{-1}\alpha_{\lambda} S_{\lambda}
      \right) 
= {}\\
{}&=
-\frac{\partial}{\partial\lambda} \bigl(S^{-1}_{\lambda_0}\bigr)\, \bar{\Id}_h (S_{\lambda_0})  - S_{\lambda_0}^{-1}
   d\dot{S}_{\lambda_0} -
   S_{\lambda_0}^{-1}\dot{S}_{\lambda_0}S_{\lambda_0}^{-1}\alpha_{\lambda_0}S_{\lambda_0}
\\
{}&{}\qquad\qquad  + S_{\lambda_0}^{-1}\dot{\alpha}_{\lambda_0}S_{\lambda_0} +
   S_{\lambda_0}^{-1}\alpha_{\lambda_0}\dot{S}_{\lambda_0} = 
   - \bar{\Id_h}\dot{S}_{\lambda_0} - \dot{S}_{\lambda_0}\alpha_{\lambda_0} +
   \alpha_{\lambda_0}\dot{S}_{\lambda_0} + \dot{\alpha}_{\lambda_0}.
\end{align*}
This implies that %
$\dot{\alpha}_{\lambda_0} =   \bar{\Id}_h \dot{S}_{\lambda_0} - 
\bbl \alpha_{\lambda_0},  \dot{S}_{\lambda_0} \bbr$ 
for all~$\lambda_0$ in~$\mathcal{I}$, 
where $\dot{S}\in\fg\otimes\bar{\Lambda}^0(\cEinf)$.
\end{rem}

Let us illustrate the technique now offered by Proposition~\ref{PropNonRemGrad} by proving that the parameter in the Das zero\/-\/curvature representation for the Mathieu $N=2$, $a=4$ super\/-\/KdV equation is \emph{essential}.

Consider P.~Mathieu's $(2|2)$-\/dimensional %
generalisation of the Korteweg\/--\/de Vries equation~\cite{MathieuNew}, namely, the $N{=}2$ supersymmetric KdV %
equation (SKdV),
\begin{equation}\label{SKdV}
\bu_t=-\bu_{xxx}+3\bigl(\bu\cD_1\cD_2\bu\bigr)_x
 +\frac{a-1}{2}\bigl(\cD_1\cD_2\bu^2\bigr)_x + 3a\bu^2\bu_x,\qquad
 \cD_i=\frac{\vec{\dd}}{\dd\theta_i}+\theta_i\cdot\,\bar{D}_x, %
\end{equation}
where
\begin{equation}\label{N2superfield}
\bu(x,t;\theta_1,\theta_2)
=u_0(x,t)+\theta_1\cdot u_1(x,t)+\theta_2\cdot
u_2(x,t)+\theta_1\theta_2 \cdot u_{12}(x,t)
\end{equation}
is the complex bosonic super\/-\/field,
$\theta_1,\theta_2$ are Grassmann variables such that $\theta_1^2=\theta_2^2=\theta_1\theta_2 + \theta_2\theta_1 = 0$, 
the two fields $u_0$ and $u_{12}$ are bosonic %
($\parity(u_0) = \parity(u_{12}) = \bzero$), 
and the fields $u_1$ and $u_2$ are
fermionic  ($\parity(u_1) = \parity(u_2) = \bone$). Expansion~\eqref{N2superfield}
converts~\eqref{SKdV} to the four\/-\/component system 
\begin{subequations}\label{SKdVComponents}
\begin{align}
u_{0;t}&=-u_{0;xxx}+\bigl(a u_0^3
   -(a+2)u_0u_{12}+(a-1)u_1u_2\bigr)_x,
\label{GetmKdV}\\
u_{1;t}&=-u_{1;xxx}+\bigl(\phantom{+}(a+2)u_0u_{2;x}+(a-1)u_{0;x}u_2
   -3u_1u_{12}+3a u_0^2u_1 \bigr)_x,\\
u_{2;t}&=-u_{2;xxx}+\bigl(-(a+2)u_0u_{1;x}-(a-1)u_{0;x}u_1
   -3u_2u_{12}+3a u_0^2u_2 \bigr)_x,\\
\underline{u_{12;t}}&=\underline{-u_{12;xxx}-6u_{12}u_{12;x}}
 +3au_{0;x}u_{0;xx}+(a+2)u_0u_{0;xxx}\notag\\
 {}&{}\qquad{}+3u_1u_{1;xx}+3u_2u_{2;xx}
 +3a\bigl(u_0^2u_{12} -2u_0u_1u_2\bigr)_x.\label{GetKdV}
\end{align}
\end{subequations}
The Korteweg\/--\/de Vries equation upon~$u_{12}$, see~\eqref{kdv} below, is underlined in~\eqref{GetKdV}.
The SKdV equation is the most interesting (in particular, bi\/-\/Hamiltonian, whence completely integrable) if $a\in\{-2, 1,4\}$,
see~\cite{MathieuNew,HKKW,KisHus09}.
Let us consider also the bosonic limit $u_1=u_2=0$ of
system~\eqref{SKdVComponents}: %
by setting
$a{=}-2$ we obtain the triangular system which consists of the modified
KdV equation for $u_0$ and an equation of KdV\/-\/type for~$u_{12}$; in the
case $a{=}1$ we obtain the Krasil'shchik\/--\/Kersten system~\cite{Cyprus2014};
for $a{=}4$, we obtain the third equation in the Kaup\/--\/Boussinesq hierarchy. 
In what follows we consider the case~$a{=}4$.

\begin{example}\label{exN2ZCRDas}
The $N{=}2$ supersymmetric $a{=}4$-\/KdV equation~\eqref{SKdVComponents} admits
the $\gsl(2|1)$-\/valued zero\/-\/curvature representation 
$\alpha^{N=2}(\veps)= A(\veps)\,\mathrm{d}x + B(\veps)\,\mathrm{d}t$,
where
\begin{equation}\label{eqDasA}
  A  =
  \begin{pmatrix}
    -\ii u_0 &
    \veps^{-1} (u_{0}^2 + u_{12} ) - \ii\veps^{-2}u_{0}  &
      - \veps^{-1} (u_2 + \ii u_1)\\
      -\veps &
      -\ii u_0 - \veps^{-1} &
      0 \\
      0 &
      \ii u_1 - u_2 &
      -2\ii u_0 - \veps^{-1}
    \end{pmatrix}, \qquad 
    \veps > 0.
  \end{equation}
The elements of the $\gsl(2| 1)$-\/matrix $B$, %
\[
B = \begin{pmatrix}
  b_{11} & b_{12} & b_{13} \\
  b_{21} & b_{22} & b_{23} \\
  b_{31} & b_{32} & b_{33}
\end{pmatrix},
\]
are as follows (see Remark~\ref{RemDasUpToS} below),
\begin{align*}
b_{11} ={} & 4 \ii u_{0}^3 - 6 \ii u_{0} u_{12}  + 4u_{0} u_{0;x} -  \ii u_{0;xx}
- u_{12;x} - 4 \ii u_{2}u_{1} + \veps^{-1} (2u_{0}^2 - u_{12}  -  \ii u_{0;x})
-  \ii \veps^{-2}u_{0} ,\\
b_{12}={} & \veps^{-1}(4u_{0}^4 + 2u_{0}^2u_{12}  + 4u_{0} u_{0;xx} -
2u_{12}^2 + 4u_{0;x}^2 - u_{12;xx} + u_{2}u_{2;x} + 8u_{2}u_{1}u_{0}
+ u_{1}u_{1;x}) + {}\\
{}&{}+ \veps^{-2}(2 \ii u_{0}^3 - 4 \ii u_{0} u_{12}  + 4u_{0}
u_{0;x} -  \ii u_{0;xx} - u_{12;x} - 2 \ii u_{2}u_{1}) + \veps^{-3} (u_{0}^2 -
u_{12}  -  \ii u_{0;x})-{}\\
{}&{} -  \ii \veps^{-4}u_{0} ,\\
b_{13} = {} & \veps^{-1}( - 5 \ii u_{0} u_{2;x} - 5u_{0} u_{1;x} -
u_{2;xx} +  \ii u_{1;xx} + 8u_{2}u_{0}^2 - 2u_{2}u_{12}  - 4 \ii u_{2}u_{0;x}
- 8 \ii u_{1}u_{0}^2 + {}\\
{}&{}  + 2 \ii u_{1}u_{12}  - 4u_{1}u_{0;x}) + \veps^{-2} ( - u_{2;x} +
 \ii u_{1;x} - 3 \ii u_{2}u_{0}  - 3u_{1}u_{0} ) + \veps^{-3}( - u_{2} +
 \ii u_{1}),\\
b_{21} = {} & 2\veps( - 2u_{0}^2 + u_{12} ) + 2 \ii  u_{0}  +
\veps^{-1},\\
b_{22} = {} & 4 \ii u_{0}^3 - 6 \ii u_{0} u_{12}  - 4u_{0} u_{0;x} -
 \ii u_{0;xx} + u_{12;x} - 4 \ii u_{2}u_{1} + \veps^{-1}( - 2u_{0}^2 + u_{12}
+  \ii u_{0;x}) + {} \\
{}&{}+ \ii \veps^{-1} u_{0}  + \veps^{-3},\\
b_{23} = {} &  u_{2;x} -  \ii u_{1;x} + 4 \ii u_{2}u_{0}  + 4u_{1}u_{0}  +
\veps^{-1}(u_{2} -  \ii u_{1}),\\
b_{31} = {} & \veps ( - u_{2;x} -  \ii u_{1;x} + 4 \ii u_{2}u_{0}  -
4u_{1}u_{0} ) + u_{2} +  \ii u_{1}, \\
b_{32} = {} & 5 \ii u_{0} u_{2;x} - 5u_{0} u_{1;x} - u_{2;xx} -  \ii u_{1;xx}
+ 8u_{2}u_{0}^2 - 2u_{2}u_{12}  + 4 \ii u_{2}u_{0;x} + 8 \ii u_{1}u_{0}^2 -
2 \ii u_{1}u_{12}  - {} \\
{}&{} - 4u_{1}u_{0;x} + \veps^{-1} u_{0} ( \ii u_{2} - u_{1}),\\
b_{33}={} & 2(4 \ii u_{0}^3 - 6 \ii u_{0} u_{12}  -  \ii u_{0;xx} - 4 \ii u_{2}u_{1}) + \veps^{-3}.
\end{align*}
\end{example}
\begin{prop}\label{PropNonRemDas}
There is no $\gsl(2|1)$\/-matrix
\[
Q = \begin{pmatrix}
  q_{11} & q_{12} & q_{13} \\
  q_{21} & q_{22} & q_{23} \\
  q_{31} & q_{32} & q_{11} + q_{22}
 \end{pmatrix} 
 \]
which would depend on~$\veps$ and satisfy the equalities
\begin{subequations}\label{eqDasQ}
  \begin{align}
    \bar{D}_x(Q) = {}&{} \tfrac{\partial}{\partial \veps} A + \bbl A, Q \bbr,\label{eqDasQA}\\
    \bar{D}_t(Q) = {}&{} \tfrac{\partial}{\partial \veps} B + \bbl B, Q \bbr.
  \end{align}
\end{subequations}
In other words, the %
parameter $\veps$ in~$\alpha^{N=2}(\veps)$ cannot be removed by using a smooth family of gauge transformations.
\end{prop}

An %
analytic proof of Proposition~\ref{PropNonRemDas} is contained in Appendix~\ref{AppClaimZCRDas}, see p.~\pageref{AppClaimZCRDas}.

\begin{rem}\label{remDasJMP12}\label{RemDasUpToS}
This zero\/-\/curvature representation~$\alpha^{N=2}(\veps)$ is not equal identically
but it is gauge\/-\/equivalent to the respective formula in Das \textit{et al.}~\cite{Das}.
The transformation between these objects contains the imaginary unit~$\ii$.
Our choice of normalisation, which is the same as in~\cite{JMP2012}, is 
due to the following argument: all the structures under study contain the Gardner
deformation~\eqref{eqCovGardnerKdV} of the Korteweg\/--\/de Vries equation~\eqref{kdv}
(so that the structures retract %
to Gardner's deformation~\cite{Miura68,Gardner} under suitable reductions).

We note further that the zero\/-\/curvature representation $\alpha^{N{=}2}$ can be used 
for construction of a solution, which is an alternative to the first solution 
reported in~\cite{HKKW}, of Gardner's deformation
problem~\cite{MathieuNew,MathieuOpen} for the $N{=}2$, $a{=}4$ SKdV equation
(we refer to~\cite{JMP2012} for details). 
The parameter $\veps$ which we use here is the parameter in the classical 
Gardner deformation of the KdV equation~\cite{Miura68}. %
This is why we denote this parameter by $\veps$ instead of~$\lambda$.
\end{rem}

\begin{example}\label{exN2ZCRRem}
Consider now another $\gsl(2|1)$-\/valued zero\/-\/curvature representation $\beta =
A\,\mathrm{d}x + B\,\mathrm{d}t$ for the $N{=}2$,\ $a{=}4$-\/SKdV equation: we let
\[
A = \begin{pmatrix}
  \lambda - \ii u_0 & 
    - \lambda^2 - (u_0^2 + u_{12}) &
      - \ii u_1 - u_2 \\
  1 &
    - \lambda - \ii u_0 &
      0 \\
  0 &
    u_2 - \ii u_1 &
      - 2\ii u_0
\end{pmatrix}.
\]
The elements of the $\gsl(2| 1)$-\/matrix $B$, %
\[
B = \begin{pmatrix}
b_{11} & b_{12} & b_{13}  \\
b_{21} & b_{22} & b_{23}  \\
b_{31} & b_{32} & b_{33}
\end{pmatrix},
\]
are given by the formulas
\begin{align*}
b_{11} = {}& 2\lambda (2u_{0}^2 - u_{12} ) - 4\ii u_{0}^3  + 6\ii u_{0} u_{12}
  + 4u_{0} u_{0;x} + \ii u_{0;xx}  - u_{12;x} + 4\ii u_{2}u_{1} , \\
b_{12} = {} & 2\lambda^2( - 2u_{0}^2 + u_{12} ) + 2\lambda ( - 4u_{0}
u_{0;x} + u_{12;x}) - 4u_{0}^4 - 2u_{0}^2u_{12}  - 4u_{0} u_{0;xx} +
2u_{12}^2 \\
{}&{}- 4u_{0;x}^2 + u_{12;xx} - u_{2}u_{2;x} - 8u_{2}u_{1}u_{0}  -
u_{1}u_{1;x}, \\
b_{13} = {} & \lambda (u_{2;x} + \ii u_{1;x}  - 4\ii u_{2}u_{0}   +
4u_{1}u_{0} ) - 5\ii u_{0} u_{2;x}  + 5u_{0} u_{1;x} + u_{2;xx} +
\ii u_{1;xx}  \\
{}&{}- 8u_{2}u_{0}^2 + 2u_{2}u_{12}  - 4\ii u_{2}u_{0;x}  - 8\ii u_{1}u_{0}^2  + 2\ii u_{1}u_{12}  + 4u_{1}u_{0;x},\\
b_{21} = {} & 2(2u_0^2 - u_{12}), \\
b_{22} = {} & 2\lambda ( - 2u_{0}^2 + u_{12} ) - 4\ii u_{0}^3  + 6\ii u_{0} u_{12}   - 4u_{0} u_{0;x} + \ii u_{0;xx}  + u_{12;x} + 4\ii u_{2}u_{1} , \\
b_{23} = {} & u_{2;x} + \ii u_{1;x}  - 4\ii u_{2}u_{0}  + 4u_{1}u_{0} ,\\
b_{31} = {} & u_{2;x} - \ii u_{1;x}  + 4\ii u_{2}u_{0}   + 4u_{1}u_{0} ,\\
b_{32} = {} & \lambda ( - u_{2;x} + \ii u_{1;x}  - 4\ii u_{2}u_{0}   -
4u_{1}u_{0} ) - 5\ii u_{0} u_{2;x}  - 5u_{0} u_{1;x} - u_{2;xx} +
\ii u_{1;xx}  + 8u_{2}u_{0}^2\\
{}&{} - 2u_{2}u_{12}  - 4\ii u_{2}u_{0;x}  - 8\ii u_{1}u_{0}^2  + 2\ii u_{1}u_{12}   - 4u_{1}u_{0;x},\\
b_{33} = {} & 2 \ii ( - 4u_{0}^3 + 6u_{0} u_{12}  + u_{0;xx} + 4u_{2}u_{1}).
\end{align*}

The $\gsl(2|1)$-\/matrix 
\[
Q = \begin{pmatrix}
  0 & 1 & 0 \\
  0 & 0 & 0 \\
  0 & 0 & 0
\end{pmatrix}
\]
satisfies the equations
\[
\frac{\partial}{\partial \lambda} A = \bar{D}_x(Q) - \bbl A, Q \bbr, \qquad
\frac{\partial}{\partial \lambda} B = \bar{D}_t(Q) - \bbl B, Q \bbr.
\]
Solving the Cauchy problem
\[
  \frac{\partial}{\partial \lambda} S  = Q S, \quad  S|_{\lambda =0} = \boldsymbol{1},
\]
we obtain the $SL(2|1)$-\/matrix 
\[
S = \begin{pmatrix}
  1 & \lambda & 0 \\
  0 & 1 & 0 \\
  0 & 0 & 1
\end{pmatrix}.
\]
This matrix~$S$ defines the --\,obviously, smooth with respect to~$\lambda$\,--
family of gauge transformations that remove the
parameter~$\lambda$ from the zero\/-\/curvature representation~$\beta$, 
i.e.\ $(\beta)^{S^{-1}} = \beta|_{\lambda = 0}$. 
Consequently, the parameter $\lambda$ in $\beta$ is removable.
\end{example}

\begin{rem}%
  Marvan's computational and moreover, cohomological techniques from~\cite{Marvan2002,Marvan2010} seem to be
  working really fine in the $\BBZ_2$-\/graded set\/-\/up.\footnote{The \emph{horizontal cohomology} groups
    introduced by Marvan in~\cite{Marvan2002,Marvan2010} are informative for the algebraic approach to
    kinematic integrability, yet they may be hard to compute (in fact, this has not been attempted
    industrially). It is the removability of ``fake'' parameters in the zero\/-\/curvature representations
    which must be focused on first; whenever it is established that a parameter cannot be removed in a smooth
    way from a smooth family, the integration of PDE under study by using the inverse
    scattering~\cite{Faddeev,ZSh} should be attempted as the proper next step (or a nontrivial Gardner
    deformation of that system be derived from the family of zero\/-\/curvature representations, and the
    integrals of motion be constructed).}  However, let us say a word of caution.
\end{rem}

\begin{lemma}\label{lemZ2CoHom}
Let $\alpha = \alpha^{\bzero} + \alpha^{\bone}$ 
be a $\fg$-\/valued zero\/-\/curvature representation of 
a given $\BBZ_2$-graded equation $\cE$ such that
$\parity(\alpha^{\bzero})=\bzero$ and $\parity(\alpha^{\bone}) = \bone$.
Then Marvan's operator $\bar{\ddb}_{\alpha}=\bar{\Id}_h-\bbl\alpha,\cdot\bbr$
is not necessarily a differential.
\end{lemma}

We refer to equation~\eqref{EqMMDiff} below and to the papers~\cite{Marvan2002,Norway} for more details on the nature and use of the mapping~$\bar{\ddb}_{\alpha}$.

\begin{proof}
Let $\beta \in \fg\otimes\bar\Lambda^0(\cEinf)$ so that
$\beta = \bz + \bo$ and consider $\alpha =
\az + \ao$, where $\parity(\az) = \parity(\bz) = \bzero$ and
$\parity(\ao)=\parity(\bo)=\bone$. Then we have that
\begin{align*}
{}&\bar\ddb_{\alpha}\circ\bar\ddb_{\alpha}(\beta) =
\bar\ddb_{\alpha}( \bar{\Id}_h\beta - \bbl \alpha, \beta \bbr) = 
\bar{\Id}_h\circ\bar{\Id}_h \beta - \bar{\Id}_h(\bbl \alpha, \beta \bbr) -
  \bbl \alpha, \bar{\Id}_h\beta - \bbl \alpha, \beta \bbr\, \bbr 
\\
{}&\ {}= - \bbl \bar{\Id}_h\alpha, \beta \bbr + \bbl \alpha, \bar{\Id}_h \beta
  \bbr - \bbl \alpha , \bar{\Id}_h \beta \bbr + \bbl \alpha, \bbl \alpha, \beta \bbr \, \bbr
= \bbl \alpha, \bbl \alpha, \beta \bbr \, \bbr - \tfrac12 \bbl \bbl \alpha,
  \alpha \bbr, \beta \bbr 
\\
{}&\ {}= \bbl \alpha, \az\bz - \bz\az + \az\bo - \bo\az + \ao\bz - \bz\ao 
  + \ao\bo + \bo\ao \bbr
  - \bbl \az\az + \az\ao + \ao\az, \beta \bbr 
\\
{}&\ {}= \az\az\bz + \az\bz\az - \az\bz\az - \bz\az\az 
  + \az\az\bo + \az\bo\az - \az\bo\az - \bo\az\az 
\\
{}&\quad{}+ \az\ao\bz + \ao\bz\az - \az\bz\ao - \bz\ao\az
  + \az\ao\bo + \ao\bo\az
  + \ao\az\bz + \az\bz\ao
\\
{}&\quad{}- \ao\bz\az - \bz\az\ao + \ao\az\bo - \az\bo\ao
  - \ao\bo\az - \bo\az\ao + \ao\ao\bz - \ao\bz\ao 
\\
{}&\quad{}- \ao\bz\ao + \bz\ao\ao + \ao\ao\bo + \ao\bo\ao
  - \az\az\bz + \bz\az\az - \az\az\bo + \bo\az\az 
\\
{}&\quad{}- \ao\az\bz + \bz\ao\az - \ao\az\bo - \bo\ao\az
  - \az\ao\bz + \bz\az\ao - \az\ao\bo - \bo\az\ao 
\\
{}&\ {}= - \az\bo\ao - 2\bo\az\ao + \ao\ao\bz - 2\ao\bz\ao + \bz\ao\ao  
+ \ao\ao\bo + \ao\bo\ao - \bo\ao\az \neq 0.
\end{align*}
This argument shows that only for \emph{parity\/-\/even} zero\/-\/curvature representations (which are constrained
by $\alpha^{\bone} = 0$) is the operator $\bar{\ddb}_{\alpha}$ always 
a differential and does Marvan's horizontal cohomology
interpretation
~\cite{Marvan2002} work %
in the $\BBZ_2$-\/graded set\/-\/up.
\end{proof}

\begin{rem}%
We have not seen any nontrivial example of zero\/-\/curvature representations with nonzero odd part (i.e.\ such that $\alpha^{\bone} \neq 0$). 
It would be interesting to either find such example or prove that it cannot exist.
\end{rem}

Let us remember that Proposition~\ref{PropNonRemGrad} and Lemma~\ref{lemZ2CoHom} can be used to construct parametric families~$\alpha_{\lambda}$ of zero\/-\/curvature representations for an equation~$\cE$. In~\cite{Marvan2010}, the horizontal gauge cohomology complex~$\bar{H}^q_{\alpha_\lambda}(\cE, \fg)$ was associated with every such family. It is standard that the first horizontal gauge cohomology group~$\bar{H}^1_{\alpha_\lambda}(\cE, \fg)$ contains the obstructions to removability of a %
parameter~$\lambda$ (cf.\ section~\ref{GradedFN} above and~\cite{KKIgonin2003,JKIgonin2002}).

\begin{example}%
Using the technique described in~\cite{Marvan2010}, %
let us examine an $\gsl(2|1)$-\/valued zero\/-\/cur\-va\-tu\-re
representation for the equation %
which found by Tian and Liu (Case~F in~\cite{TianLiu5ord}, see also~\cite{TianWang2016}):
\begin{subequations}\label{eq5ordF}
\begin{align}
u_t = {} & {} u_{5x} + 2a uu_{xxx} + 4au_xu_{xx} + \tfrac{6a^2}{5}u^2u_x - a\xi_{xxx}\xi_x 
 + \tfrac{3a^2}{5}u\xi_{xx}\xi + \tfrac{3a^2}{5}u_x\xi_x\xi,\label{eq5ordFBosonic}\\
\xi_t={} & {} \xi_{5x} + 2au\xi_{xxx} + 3au_x\xi_{xx} + au_{xx}\xi_x + \tfrac{3a^2}{5}u^2\xi_x 
 + \tfrac{3a^2}{5}uu_x\xi, \qquad (a=5);
\end{align}
\end{subequations}
the parities are $\parity(u) = \bzero$ and $\parity(\xi) = \bone$. 

We start from the non\/--\/parametric zero\/--\/curvature representation $\alpha^{\text{5ord}}_0 = A_0\,\Id x + B_0\,\Id t$ for
system~\eqref{eq5ordF}:
\begin{align*}
A_0 = {} & {} \begin{pmatrix}
  0 & - u & \xi \\
  1 &   0 & 0   \\
  0 &-\xi & 0 
\end{pmatrix}, \\ 
B_0 = {} & {} \begin{pmatrix}
  6uu_x + u_{xxx} + 3\xi\xi_{xxx} &
    b_{12} &
      b_{13} \\
  6u^2 + 2u_{xx} + 6\xi \xi_{x} &
    - 6u u_{x} - u_{xxx} - 3\xi \xi_{xx} & 
      b_{23} \\
  9u \xi_{x} + \xi_{xxx} - 3\xi u_{x} &
    - 9u \xi_{xx} - \xi_{4x} - 6\xi_{x}u_{x} - 6\xi u^2 + \xi u_{xx} &
      0
\end{pmatrix},
\end{align*}
where
\begin{align*}
b_{12} = {} & {}   - 6u^3 - 8u u_{xx} - u_{4x} - 6u_{x}^2 + 3\xi_{xx}\xi_{x} + 3\xi u \xi_{x} - 2\xi \xi_{xxx},\\
b_{13} = {} & {} 9u \xi_{xx} + \xi_{4x} + 6\xi_{x}u_{x} + 6\xi u^2 - \xi u_{xx},\\
b_{23} = {} & {} 9u \xi_{x} + \xi_{xxx} - 3\xi u_{x}.
\end{align*}
This $\gsl(2|1)$-\/valued zero\/-\/curvature representation can easily be obtained by 
using %
two ideas: (\textit{i})~it should reduce to the standard %
$\gsl(2)$-\/valued zero\/-\/curvature representation of the Korteweg\/--\/de
Vries equation under a reduction of~\eqref{eq5ordFBosonic} to its higher symmetry;
(\textit{ii}) the sought\/-\/for structure is expected to be homogeneous with respect to the tuple of scaling weights
(see~\cite{HKKW,SsTools} in this context).

Let us recall from~\cite{Marvan2010} that for a given $\fg$-\/valued zero\/-\/curvature representation~$\alpha^{\text{5ord}}_0 =
A_0\,\Id x + B_0\,\Id t$, the $\fg$-\/valued $1$-\/form $A_1\,\Id x + B_1\,\Id t$ is a $\bar{\ddb}_{\alpha}$-\/cocycle
if and only if the block matrices
\[
  A^{[1]} = \begin{pmatrix}
    A_0 & 0 \\ A_1 & A_0
  \end{pmatrix},\qquad
  B^{[1]} = \begin{pmatrix}
    B_0 & 0 \\ B_1 & B_0
  \end{pmatrix}
\]
constitute a zero\/-\/curvature representation, i.e.
\[
  \bar{D}_t A^{[1]} - \bar{D}_x B^{[1]} + [A^{[1]}, B^{[1]}] = 0.
\]
Moreover, two cocyles differ by a coboundary if and only if the 
respective %
zero\/-\/curvature
representations are gauge equivalent with respect to a gauge matrix of the form
\[
  S^{[1]} = \begin{pmatrix}
    E & 0 \\ S & E
  \end{pmatrix},  
\]
with the unit matrix~$E$ at the diagonal and some matrix~$S$.

Using this condition and the analytic software \texttt{SsTools}~\cite{SsTools}, 
we found the cocycle~$A_1\,\Id x + B_1\,\Id t$, where
\begin{subequations}\label{eq5ordtrivialcocyle}
\begin{align}
  A_1 = {} & {} \begin{pmatrix}
    1 & 0 & 0 \\
    0 &-1 & 0 \\
    0 & 0 & 0
  \end{pmatrix}, \\
  B_1 = {} & {} \begin{pmatrix}
    6u^2 + 2u_{xx} + 6\xi \xi_{x} & - 12u u_{x} - 2u_{xxx} - 6\xi \xi_{xx} & 9u \xi_{x} + \xi_{xxx} - 3\xi u_{x}\\
    0 & - 6u^2 - 2u_{xx} - 6\xi \xi_{x} & 0 \\
    0 &  - 9u \xi_{x} - \xi_{xxx} + 3\xi u_{x} & 0
  \end{pmatrix},
\end{align}
\end{subequations}
for the non\/-\/parametric $\gsl(2|1)$-\/valued zero--\/curvature representation
$\alpha^{\text{5ord}}_0$ of system~\eqref{eq5ordF}.
Thus we extend the zero\/-\/curvature representation $\alpha^{\text{5ord}}_0$
to the $\gsl(2|1)$-\/valued
zero\/-\/curvature representation~$\alpha^{\text{5ord}} = A\,\Id x + B\,\Id t$ with 
a %
parameter~$\lambda$, here
\[
  A = \begin{pmatrix}             
    \lambda & - u  - \lambda^2 & \xi \\
    1 & - \lambda & 0 \\
    0 & - \xi &0 
  \end{pmatrix}, %
  \qquad
  B = \begin{pmatrix}
    b_{11} & b_{12} & b_{13} \\
    b_{21} & -b_{11} & b_{23} \\
    b_{31} & b_{32} & 0
  \end{pmatrix},
\]
where
\begin{align*}
b_{11} = {} & {} 2\lambda (3u^2 + u_{xx} + 3\xi \xi_{x}) + 6u u_{x} + u_{xxx} + 3\xi \xi_{xx}, \\
b_{12} = {} & {} 2\lambda^2( - 3u^2 - u_{xx} - 3\xi \xi_{x}) + 2\lambda ( - 6u u_{x} - u_{xxx} 
              - 3\xi \xi_{xx}) - 6u^3 - 8u u_{xx} - u_{4x} - 6u_{x}^2 \\
         {} & {} + 3\xi_{xx}\xi_{x} + 3\xi u \xi_{x} - 2\xi \xi_{xxx},\\
b_{13} = {} & {} \lambda (9u \xi_{x} + \xi_{xxx} - 3\xi u_{x}) + 9u \xi_{xx} + \xi_{4x} + 6\xi_{x}u_{x} + 6\xi u^2 - \xi u_{xx},\\
b_{21} = {} & {} 2(3u^2 + u_{xx} + 3\xi \xi_{x}),\\
b_{23} = {} & {} 9u \xi_{x} + \xi_{xxx} - 3\xi u_{x}, \\
b_{31} = {} & {} 9u \xi_{x} + \xi_{xxx} - 3\xi u_{x}, \\
b_{32} = {} & {} \lambda ( - 9u \xi_{x} - \xi_{xxx} + 3\xi u_{x}) - 9u \xi_{xx} - \xi_{4x} - 6\xi_{x}u_{x}
              - 6\xi u^2 + \xi u_{xx}.
\end{align*}
But is this parameter $\lambda$ (non)\/removable\,?

It is easy to see that the gauge transformation~$S^{[1]}$ given by
\[
  S^{[1]} = \begin{pmatrix} E  & 0 \\ S & E  \end{pmatrix},\quad
  \text{where}\quad S = \begin{pmatrix}
    0 & 1 & 0 \\ 0 & 0 & 0 \\ 0 & 0 & 0
  \end{pmatrix}.
\]
trivialises %
cocycle~\eqref{eq5ordtrivialcocyle}\,! Namely, we have that
\begin{align*}
  \left(A^{[1]}\right)^{S^{[1]}} = {} \begin{pmatrix} A_0 & 0 \\ 0 & A_0 \end{pmatrix},\qquad
  \left(B^{[1]}\right)^{S^{[1]}} = {} \begin{pmatrix} B_0 & 0 \\ 0 & B_0 \end{pmatrix}.
\end{align*}
Therefore, %
cocycle~\eqref{eq5ordtrivialcocyle} cannot be used to insert a nonremovable parameter in the zero\/-\/curvature representation~$\alpha^{\text{5ord}}_0$.

We deduce that the above family of zero\/-\/curvature representations $\alpha^{\text{5ord}}$ does not manifest that equation~\eqref{eq5ordF} 
indeed is integrable.\footnote{%
At the same time, we do not claim that no non\/-\/removable parameter can be inserted into the $\gsl(2|1)$-\/valued zero\/-\/curvature representation $\alpha^{\text{5ord}}_0$. Indeed, to establish that one must prove the vanishing of the respective gauge cohomology group.}
This is why in Example~\ref{Ex98} on p.~\pageref{Ex98} below
we shall consider much larger, $\mathfrak{sl}(9|8)$-\/valued zero\/-\/curvature representations for~\eqref{eq5ordF}. We argue on p.~\pageref{pClaim} that the parameter~$\lambda$ in that family is not removable using gauge transformations. %
\end{example}

\section{Families of coverings and the Fr\"olicher\/--\/Nijenhuis bracket
}\label{GradedFN}
\noindent%
In this section we recall another mechanism for deforming %
nonlocal structures over partial differential (super-)\/equations~$\cE$.
Referring to the notion of Fr\"o\-li\-cher\/--\/Nijenhuis bracket,
this concept is better known in the context of B\"acklund (auto)\/trans\-for\-ma\-ti\-ons between PDEs, see~\cite{JKIgonin2002,AVKVestnikMGU2002,KiselevGolovkoActaAM2004}.
The two deformation strategies from the preceding and present sections will be brought together in what follows.

\subsection{The structure element of a covering}
Consider a $(k_0|k_1)$-\/dimensional 
covering $\tau\colon\tilde{\cE} = W \times \cEinf \to \cEinf$ with even nonlocal coordinates
$w^1,\dots,w^{k_0}$ and odd nonlocal coordinates $f^1,\dots,f^{k_1}$ on the $(k_0|k_1)$-\/dimensional %
fibre superspace%
~$W$ (see Definition~\ref{defCovering} on p.~\pageref{defCovering}%
). The prolongations $\tilde{D}_{x^i}$ of the total derivatives $\bar{D}_{x^i}$ to the
covering equation~$\tilde{\cE}$ are given by the formulas~\cite{BVV,KK2000}
\[
\tilde{D}_{x^i} = \bar{D}_{x^i} + w^p_{x^i} \frac{\partial}{\partial w^p} +
f^q_{x^i} \frac{\vec{\partial}}{\partial f^q},
\qquad 1\leqslant i\leqslant n.
\]
These total derivatives $\tilde{D}_{x^i}$ determine the Cartan
distribution $\cC(\tilde{\cE})$ on the covering equation~$\tilde{\cE}$.
In turn, the Cartan distribution $\cC(\tilde{\cE})$ yields the Cartan
connection $\cC_{\tilde{\cE}}\colon \Gamma (TM) \to \Gamma (T\tilde{\cE})$
such that $\cC_{\tilde{\cE}}\colon \dd/\dd x^i \mapsto \tilde{D}_{x^i}$.
Using those differential one\/-\/forms from $\Lambda^1(\tilde{\cE})$ which annihilate the horizontal $n$-\/dimensional planes of the Cartan distribution~$\cC(\tilde{\cE})$, 
we obtain the linear $\Gamma (T\tilde{\cE})$-\/valued connection form
$U_{\tilde{\cE}} \in \Gamma T(\Lambda^1(\tilde{\cE}))$, also called the \emph{structure element} of the covering~$\tau$ in this setting.

Specifically, for $\cA\mathrel{{:}{=}}C^\infty(M)$, for the properly understood inductive limit
${\cB\mathrel{{:}{=}}C^\infty(\tilde{\cE})}$ of algebras filtered by the jet orders, and embedding homomorphism
$\varphi=(\pi\circ\tau)^*\colon \cA\hookrightarrow\cB$, it is readily seen that the composition
$\Id_{\text{dR}}\circ\varphi\colon \cA\to\Lambda^1(\cB)$ is a derivation. Consequently, we can consider the
derivation
$\cC_{\tilde{\cE}}\bigl(\Id_{\text{dR}}\circ\varphi \bigr)\in\Gamma T\bigl(\Lambda^1(\tilde{\cE})\bigr)$.  The
structure element $U_{\tilde{\cE}} \in \Gamma T(\Lambda^1(\tilde{\cE}))$ of the covering~$\tau$ is defined by
the formula $U_{\tilde{\cE}} = -\cC_{\tilde{\cE}}\bigl(\Id_{\text{dR}}\circ\varphi\bigr) + \Id_{\text{dR}}$
(see~\cite{KKIgonin2003,JKIgonin2002,JKFlatCon} and references therein).
In coordinates, we have that\footnote{The notion of Cartan differential $\Id_{\cC}$ and its restriction to equations~$\cE^\infty$ is recalled on p.~\pageref{pCartanDiff}.}
\begin{equation}\label{EqConForm}
U_{\tilde{\cE}} = 
  \bar{\Id}_{\cC}(u^k_{\sigma_\bzero})\frac{\partial}{\partial  u^k_{\sigma_\bzero}} 
  + \bar{\Id}_{\cC}(\xi^a_{\sigma_\bone})\frac{\vec{\partial}}{\partial  \xi^{a}_{\sigma_\bone}} 
  + (\Id w^p - w^p_{x^i}\, \Id x^i) \frac{\partial}{\partial w^p} 
  + (\Id f^q - f^q_{x^i}\, \Id x^i) \frac{\vec{\partial}}{\partial f^q}.
\end{equation}
The Cartan connection on~$\tilde{\cE}$ is flat:
\[
\fnl U_{\tilde{\cE}}, U_{\tilde{\cE}} \fnr = 2\times\text{curvature}=0;
\]
we recall that the Fr\"olicher\/--\/Nijenhuis bracket $\fnl \cdot, \cdot
\fnr$ on the space $\Gamma T(\Lambda^*(\tilde{\cE}))$ of vector\/-\/valued differential forms
is defined by the formula~\cite{KK2000}%
\[
\fnl \Omega, \Theta \fnr (g) = \Lie_{\Omega}(\Theta(g)) - (-1)^{rs
  + \parity(\Omega)\parity(\Theta)} \Lie_{\Theta}(\Omega(g))
\]
for $\Omega \in \Gamma T(\Lambda^r(\tilde{\cE}))$, $\Theta \in
\Gamma T(\Lambda^s(\tilde{\cE}))$, and $g \in C^{\infty}(\tilde{\cE})$.
Here $\Lie_{\Omega} = \mathrm{i}_{\Omega} \circ \Id + \Id \circ \mathrm{i}_{\Omega}$ is the Lie derivative.

Let $\tau_\lambda \colon \tilde{\cE}_\lambda = W_\lambda\times \cEinf \to \cEinf$ be a smooth family of
coverings over $\cEinf$ depending on a parameter~$\lambda\in\BBC$ and
$U_{\lambda}$ be the corresponding %
structure element of~$\tau_\lambda$. 
In agreement with%
~\cite{JKIgonin2002}, we assume that the distributions
$\cC(\tilde{\cE_\lambda})$ are diffeomorphic to each other 
at different values of~$\lambda$ %
under a smooth family of diffeomorphims of the manifolds~$\tilde{\cE_\lambda}$.
The evolution of $U_{\lambda}$ with respect to $\lambda$ is
described by the equation~\cite{KKIgonin2003,JKIgonin2002}%
\begin{equation}\label{eqFNXshadow}
\frac{\Id}{\Id \lambda} U_{\lambda} = \fnl X, U_{\lambda} \fnr,
\end{equation}
where $X\in \Gamma (T\tilde{\cE})$ is some vector field on~$\tilde{\cE}_\lambda$.

\subsection{Two realisations of Lie superalgebras}
In the covering~$\tau$ which has been considered so far, the superdimension of the fibre~$W$ is $(k_0|k_1)$; the covering is realised in terms of vector fields on the fibres. To narrow the class of vector field subalgebras at hand --\,in particular, to force such vector fields belong to a given Lie algebra~$\gothg$ that appeared in section~\ref{GradedMarvan} in the construction of $\gothg$-\/valued zero\/-\/curvature representations\,-- let us recall two techniques of matrix vs (non)linear vector field realisations of Lie algebras.
We have that elements~$\rho(g)$ of matrix representation for~$g\in\gothg$ act by endomorphisms on a vector superspace of superdimension $(k_0+1|k_1)$. The excess of parity\/-\/even dimension allows us to view the respective graded Cartesian coordinates as homogeneous coordinates on projective superspaces~$W$ such that vector fields $\boldsymbol{\varrho}(g) \in \Gamma(TW)$ on them belong to the other representation, $\boldsymbol{\varrho} \colon \fg \to \Vect(W%
;\text{poly})$, of the Lie algebra~$\gothg$.

Specifically, let $\fg\subseteq \gl(k_0+1|k_1)$ be a finite\/-\/dimensional Lie
superalgebra with basis~$e_i$, here $k_0$,\ $k_1 \geqslant 0$
and the index~$i$ runs from~$1$ to the dimension of~$\fg$. 
We consider two representation of~$\fg$:
\begin{enumerate}
 \item $\rho \colon \fg \to \Mat(k_0+1,k_1)$, that is,
a matrix representation;%
 \item $\boldsymbol{\varrho} \colon \fg \to \Vect(W%
;\text{poly})$, which is 
the representation in the space of vector fields with polynomial
coefficients on the $(k_0|k_1)$-\/dimensional supermanifold~$W$ with
local parity\/-\/even coordinates $w^1$,\ $\dots$,\ $w^{k_0}$
and $f^{1}$,\ $\dots$,\ $f^{k_1}$ of odd parity. 
\end{enumerate}
Let us recall an explicit %
construction %
of such representations~$\boldsymbol{\varrho}$; it %
will be essential in what follows.

\begin{example}[Nonlinear realisations %
of Lie algebras in the spaces of vector fields via the
projective substitution~\cite{RoelofsThesis}]\label{exProjSub}%

Let $N$ be a $(k_0+1| k_1)$-\/dimensional manifold.  Because the reasoning is local, consider a
chart~$\mathcal{U}\subseteq N$ equipped with a $(k_0+k_1+1)$-\/tuple of rectifying
coordinates %
$\boldsymbol{v} = (v^0$,\ $\dots$,\ $v^{k_0+k_1} )$, %
where $v^{0}$,\ $\dots$,\ $v^{k_0}$ are even coordinates and $v^{k_0+1}$,\ $\dots$,\ $v^{k_0+k_1}$ are odd coordinates.
By definition, put
\[
\partial_{\boldsymbol{v}} = (\smash{\vec{\partial}_{v^0}}, %
\dots, \smash{\vec{\partial}_{v^{k_0+k_1}}} 
)^{\mathrm{t}}.  %
\]
For the matrix Lie superalgebra %
$\fg\subseteq\gl(k_0+1| k_1)$ at hand, take any matrix $%
{g}\in\fg$
and represent it %
in the space of \emph{linear} vector fields on the domain 
in~$\BBR^{k_0+1| k_1}$ by using the formula
\[
{g}\longmapsto V_{%
{g}} = \boldsymbol{v}g\partial_{\boldsymbol{v}}.
\]
By construction, the linear vector field representation $%
{g}\mapsto V_{%
{g}}$ of the matrix Lie
algebra~$\fg$ preserves all the 
commutation relations in it, %
\[
[%
V_{%
{g}}, V_{%
{h}} ]%
= [%
\boldsymbol{v}g\partial_{\boldsymbol{v}},
\boldsymbol{v}h\partial_{\boldsymbol{v}} ]%
= \boldsymbol{v}\bbl
g,h \bbr 
\partial_{\boldsymbol{v}} =
 V_{\bbl 
{g}, %
{h} \bbr
}, \qquad %
\forall\:{h}, %
{g}\in\fg.
\]
The problem we are solving is the realisation of matrix Lie superalgebra~$\fg$ by using vector fields with
(non)\/linear coefficients.
Consider a point $\boldsymbol{x}\in\BBR^{k_0+1|k_1}$ --\,originally, from the chart $\mathcal{U}\subseteq N$\,-- with nonzero coordinate $v^0(\boldsymbol{x})\mathrel{{=}{:}}\mu\neq0$.
By construction, $\boldsymbol{v} = (v^0$,\ $\dots$,\ $v^{k_0+k_1} )$~is the tuple of Cartesian coordinates on the image of~$\mathcal{U}$ under the coordinate mapping. 
Consider the locally defined mapping $p\colon\boldsymbol{v}(\mathcal{U})\subseteq
\BBR^{k_0+1| k_1}\to\BBR^{k_0| k_1}$ that takes every point
$\boldsymbol{v}=(v^0$,\ $\ldots$,\ $v^{k_0+k_1})$ from the domain at hand 
to the point %
$(w^1,\ \dots,\ w^{k_0+k_1} )\in\BBR^{k_0| k_1}$, where
\[
w^i = \frac{\mu v^i}{v^0},\quad 1\leqslant i \leqslant k_0+k_1.
\]
The differential $\Id p_{\bv} \colon T_{\bv} \BBR^{k_0+1| k_1}\to T_{p(\bv)}\BBR^{k_0| k_1}$ at the point
$\bv\in\BBR^{k_0+1| k_1}$ acts on the basic vectors from the $(k_0+k_1+1)$-\/tuple~$\dd_{\boldsymbol{v}}$ as follows,
\begin{align*}
\Id p_\bv\left( \frac{\dd}{\dd v^0}\right)
  &= \sum_{j=1}^{k_0+k_1} \frac{\dd w^j}{\dd v^0}\,\frac{\dd}{\dd w^j} 
  = \sum_{j=1}^{k_0+k_1}-\frac{\mu v^j}{(v^0)^2}\,\frac{\dd}{\dd w^j}, \\
\Id p_\bv\left( \frac{\dd}{\dd v^i}\right)
  &= \sum_{j=1}^{k_0+k_1}\frac{\dd w^j}{\dd v^i}\,\frac{\dd}{\dd w^j} = \frac{\mu}{v^0}\,\frac{\dd}{\dd w^i},
  \qquad \lefteqn{1\leqslant i \leqslant k_0+k_1.}
\end{align*}
By definition, put
\[
\bw = (\mu, w^1, \ldots, w^{k_0+k_1}), \qquad 
\dd_{\bw} = \left( -\frac{1}{\mu}\sum_{j=1}^{k_0+k_1} w^j \frac{\vec{\dd}}{\dd w^j}, 
\ %
\frac{\vec{\dd}}{\dd w^1}, \ldots,
  \frac{\vec{\dd}}{\dd w^{k_0+k_1}} \right)^{\mathrm{t}}.
\]
Now it is readily seen that %
the vector field $X_{%
{g}}=\mathrm{d}p\,(V_{%
{g}})$
is expressed by the formula %
\begin{equation}
\label{wroelofs}
X_{%
{g}} = \boldsymbol{w} g \partial_{\boldsymbol{w}}.
\end{equation}
Generally speaking, the vector field~$X_{%
{g}}$ on the respective subset of %
the target space $\BBR^{k_0| k_1}$ is nonlinear with respect to the variables~$w^0$,\ $\ldots$,\ %
$w^{k_0+k_1}$. %
Nevertheless, the commutation relations between vector fields of such type
are inherited 
from the relations in Lie algebra~$\fg\ni %
{g},%
{h}$:
\[
[X_{%
{g}}, X_{%
{h}} ]%
= [%
\mathrm{d}p\,(V_{%
{g}}),\mathrm{d}p\,(V_{%
{h}}) ]%
= \mathrm{d}p\,([%
V_{%
{g}}, V_{%
{h}} ]%
) = \mathrm{d}p\,(V_{\bbl 
{g}, %
{h} \bbr
}) =
  X_{\bbl 
{g}, %
{h} \bbr
}.
\]
Take $X_%
{g}$ for the representation $\boldsymbol{\varrho}(%
{g})$ of elements~$%
{g}$ 
of Lie %
superalgebra~$\fg$. 
\end{example}

\begin{notation}
Whenever $\fg\subseteq\mathfrak{gl}(n_\bzero|n_\bone)$ is a finite\/-\/dimensional %
Lie superalgebra, it admits the %
tautological matrix representation~$\rho\colon\fg\to\mathfrak{gl}(n_\bzero|n_\bone)$ and %
realisation~$\boldsymbol{\varrho}$ in spaces of vector fields from Example~\ref{exProjSub}.
We denote by~$\nabla\colon\rho\rightleftarrows\boldsymbol{\varrho}$ the switch between these representations.
\end{notation}

We refer to~\cite{PopovychBoykoNesterenko2003,Shchepochkina2006} for other examples of realisations %
of Lie algebras by using vector fields.

\subsection{Examples of the structure element deformation}
Example~\ref{exProjSub} yields a regular procedure that takes Lie super\/-\/algebra $\fg$-\/valued zero\/-\/curvature representations to certain %
$(k_0|k_1)$-\/dimensional %
coverings over the %
$\BBZ_2$-\/graded PDE at hand;
the coefficients of Maurer\/--\/Cartan's horizontal one\/-\/form~$\alpha$ determine the rules to differentiate the nonlocal variables with respect to the independent coordinates such as~$x$ and~$t$. Let us see what the deformation mechanism of Fr\"olicher\/--\/Nijenhuis bracket can then do for such 
coverings~--- and let us inspect what the vector fields~$X$ in~\eqref{eqFNXshadow} mean in terms of the PDE under study.

\begin{example}\label{exThiune}
Consider the $N{=}2$, $a{=}4$ SKdV equation~\eqref{SKdVComponents} and the
family of coverings over it
derived from the zero\/-\/curvature representation which we addressed 
in Example~\ref{exN2ZCRDas}. 
Let us find the vector field~$X$ corresponding to that family.
We are %
interested in finding those solutions of~\eqref{eqFNXshadow} which do not degenerate under the reduction of~\eqref{SKdVComponents} to its bosonic limit. In turn, for that system of KdV\/-\/type we are interested in finding only those solutions of~\eqref{eqFNXshadow} which stem from the deformation generators~$X$ for the Korteweg\/--\/de Vries equation for~$u_{12}$; we recall that the KdV equation is contained in the bosonic limit of~\eqref{SKdVComponents} by Mathieu's construction.

This allows us to analyse the structural element's deformation problem ``inside\/-\/out'' in three steps: first, we do it for the Korteweg\/--\/de Vries equation; we proceed with the Kaup\/--\/Boussinesq hierarchy and finally, we recover the full $(2|2)$-\/dimensional supergeometry of the $N=2$ supersymmetric $a=4$-\/equation~\eqref{SKdVComponents}.

We begin with %
the Korteweg\/--\/de Vries equation
\begin{equation}\label{kdv}
    u_{12;t} = - u_{12;xxx} - 6u_{12}u_{12;x}.
\end{equation}
Over it, we consider the covering 
derived from the Gardner deformation~\cite{Miura68},
\begin{subequations}\label{eqCovGardnerKdV}
\begin{align}
      w_x& = \tfrac{1}{\veps}( w - u_{12}) - \veps w^2\label{eqCovGardnerKdVx},\\
      w_t& = \tfrac{1}{\veps}( u_{12;xx} + 2u_{12}^2) + \tfrac{1}{\veps^2} u_{12;x} + \tfrac{1}{\veps^3}u_{12}
      + \left(-2u_{12;x} - \tfrac{2}{\veps}u_{12} - \tfrac{1}{\veps^3}\right)w + \left(2\veps u_{12} +
        \tfrac{1}{\veps}\right) w^2. \label{eqCovGardnerKdVt}
\end{align}
\end{subequations}
The Cartan structure element for the covering~\eqref{eqCovGardnerKdV} is as follows,
\[
  U_{\tilde{\cE}} = \bar{\Id}_{\cC}(u_{12;\sigma})\frac{\dd}{\dd u_{12;\sigma}}
  + (\Id w - w_x\,\Id x - w_t\,\Id t)\frac{\dd}{\dd w},
\]
where $w_x$ and $w_t$ are given by~\eqref{eqCovGardnerKdV}.
The solutions of equation~\eqref{eqFNXshadow} for this case
are
  \begin{align*}
    X_1 &= \veps^{-2}(   - x\,{\dd}/{\dd x}
    - 3t\,{\dd}/{\dd t}
    + 2u_{12}  \,{\partial}/{\partial u_{12}}
    + \ldots
    + 2w  \,{\dd}/{\dd w} ),\\
  X_2 &= -2\veps( 6t\,{\dd}/{\dd x}
    + \,{\dd}/{\dd u_{12}}
    + \dots)
    - {\dd}/{\dd w}.
\end{align*}  

Let us show that the vector field~$X_1$
can be lifted to the corresponding covering over the higher symmetry, 
\begin{subequations}\label{kb3}
\begin{align}
u_{0;t}&=-u_{0;xxx}+\bigl(4 u_0^3  -6u_0u_{12}\bigr)_x,\\
u_{12;t}&=-u_{12;xxx}-6u_{12}u_{12;x} +12u_{0;x}u_{0;xx}+6u_0u_{0;xxx} +12\bigl(u_0^2u_{12}\bigr)_x,
\end{align}
\end{subequations}
of the Kaup\/--\/Boussinesq equation~\cite{Kaup75}
\[
u_{0;s} = (- u_{12} + 2u_0^2)_x, \qquad u_{12;s} = (u_{0;xx} + 4u_0u_{12})_x.
\]
We recall that system~\eqref{kb3} is the bosonic limit of~\eqref{SKdVComponents} with~$a{=}4$ under setting~$u_1=u_2=0$.

A family of coverings over equation~\eqref{kb3} is determined by the 
formulas\footnote{Here and in what follows we underline the covering that encodes Gardner's deformation~\eqref{eqCovGardnerKdV} for the classical KdV equation~\eqref{kdv}.}
\begin{subequations}\label{covKB}
\begin{align}
  \underline{w_x} = {}& \underline{- \veps w^2} + \veps^{-1} (\underline{w - u_{12}} - u^2_0) + \ii \veps^{-2}u_0,\\
  \underline{w_t} = {}& 2\veps w^2( - 2u_{0}^2 + \underline{u_{12}} ) + 2w ( - \ii w u_{0} + 4u_{0} u_{0;x}
  \underline{- u_{12;x}}) + \veps^{-1}(\underline{w^2 - 2w u_{12} +  2u_{12}^2} \notag\\
  {}&{} \underline{+ u_{12;xx}} + 2\ii w u_{0;x} - 4u_{0}^4 - 2u_{0}^2u_{12} - 4u_{0} u_{0;xx}
  + 4w u_{0}^2 - 4u_{0;x}^2 ) + \veps^{-2}(2\ii w u_{0}  \notag\\
  {}&{} + 2\ii u_{0}^3 - 4\ii u_{0} u_{12} - 4u_{0} u_{0;x} - \ii u_{0;xx} + \underline{u_{12;x}}) +
  \veps^{-3}(\underline{u_{12} -w } - u_{0}^2 - \ii u_{0;x}) - \ii \veps^{-4}u_{0}.
\end{align}
\end{subequations}
The Cartan structure element for the covering~\eqref{covKB} is as follows,
\[
  U_{\tilde{\cE}} = \bar{\Id}_{\cC}(u_{0;\sigma})\frac{\dd}{\dd u_{0;\sigma}} + \bar{\Id}_{\cC}(u_{12;\sigma})\frac{\dd}{\dd u_{12;\sigma}}
  + (\Id w - w_x\,\Id x - w_t\,\Id t)\frac{\dd}{\dd w},
\]
where the derivatives~$w_x$ and~$w_t$ are given by formulas~\eqref{covKB} and $\sigma$~is a summation index.
At every~$\veps\neq0$ such coverings are constructed %
by the change of Lie algebra realisation~$\nabla$ %
in the zero\/-\/curvature representation for~\eqref{kb3}. %
In turn, that representation\footnote{Remarkably, that zero\/-\/curvature representation for~\eqref{kb3} was
  re\/-\/discovered %
  in~\cite{BorisovPavlovZykov2001} not in the context of super\/-\/system~\eqref{SKdVComponents}.}  
is obtained by using the reduction $u_1=u_2=0$ in the zero\/-\/curvature representation $\alpha^{N{=}2}$ for the
$N{=}2$, $a{=}4$ SKdV equation~\eqref{SKdVComponents} (see \cite{Das,JMP2012} and Example~\ref{exN2ZCRDas}).

For this family of coverings over system~\eqref{kb3}, 
the solution of equation~\eqref{eqFNXshadow} is given by the vector field
\[
X = 
  \veps^{-1} (
  - x \,{\dd}/{\dd x}
  - 3 t\, {\partial}/{\partial t}
  +  u_0\, {\partial}/{\partial u_0}
  + 2 u_{12}\, {\partial}/{\partial u_{12}}
  + \ldots
  + 2 w\, {\partial}/{\partial w}).
\]
We note that, the same as it is in the case of KdV equation~\eqref{kdv},
we obtain the vector field corresponding to the scaling symmetry
(see Appendix~\ref{appParamNotes}; %
we refer to diagram~\eqref{CDSasaki} on p.~\pageref{CDSasaki} in particular).

Finally,\label{exThiuneN2} 
let us consider the full $N{=}2$, $a{=}4$ SKdV equation~\eqref{SKdVComponents} 
and over it, let us
construct a $(1|1)$-\/dimensional covering by switching to a different 
realisation %
of the Lie superalgebra in
the $\gsl(2|1)$-\/valued zero\/-\/curvature representation~$\alpha$, which was 
considered in Example~\ref{exN2ZCRDas} on p.~\pageref{exN2ZCRDas}.  
Using the %
realisation~$\boldsymbol{\varrho}$ %
from Example~\ref{exProjSub}, we obtain 
the $(1|1)$-\/dimensional covering over $N{=}2$, $a{=}4$ SKdV 
equation~\eqref{SKdVComponents}:
\begin{align*}
\underline{w_x} = {} & \underline{- \veps w^2} + (f_{2}u_{2} -
\ii f_{2}u_{1} ) + \veps^{-1} (\underline{w - u_{12}}  -
u_{0}^2  ) + \veps^{-2} \ii  u_{0}, \\
f_x = {} &  - \veps w f_{2} +  \ii   u_{0} f_{2} + \veps^{-1}(f_{2} + u_{2} +
 \ii u_{1}),\\
\underline{w_t} = {} & \veps ( - 4w^2u_{0}^2 + \underline{2w^2u_{12}}  + f_{2}w u_{2;x} - \ii f_{2}w u_{1;x}   + 4\ii f_{2}u_{2}w u_{0}   + 4f_{2}u_{1}w u_{0} ) - 2\ii w^2u_{0}  
\\
{}&{}+ 8w u_{0} u_{0;x} \underline{- 2w u_{12;x}} - 5\ii f_{2}u_{0} u_{2;x}   -
5f_{2}u_{0} u_{1;x} - f_{2}u_{2;xx} + \ii f_{2}u_{1;xx}   - f_{2}u_{2}w  \\
{}&{} + 8f_{2}u_{2}u_{0}^2 - 2f_{2}u_{2}u_{12}  - 4\ii f_{2}u_{2}u_{0;x}  
+ \ii f_{2}u_{1}w   -
8\ii f_{2}u_{1}u_{0}^2  + 2\ii f_{2}u_{1}u_{12}  - 4f_{2}u_{1}u_{0;x} \\
{}&{}+ \veps^{-1}(\underline{w^2 + 2u_{12}^2 - 2w u_{12} + u_{12;xx}} + 4w u_{0}^2   + 2\ii w u_{0;x}  -
4u_{0}^4 - 2u_{0}^2u_{12}  - 4u_{0} u_{0;xx}  \\
{}& {}    - 4u_{0;x}^2 - \ii f_{2}u_{2}u_{0}   - f_{2}u_{1}u_{0}  -
u_{2}u_{2;x} - 8u_{2}u_{1}u_{0}  - u_{1}u_{1;x}) + \veps^{-2}(2\ii w u_{0}
 + 2\ii u_{0}^3  \\
{}&{}- 4\ii u_{0} u_{12}   - 4u_{0} u_{0;x} - \ii u_{0;xx}  + \underline{u_{12;x}} -
2\ii u_{2}u_{1} ) + \veps^{-3}( \underline{ u_{12} - w}  - u_{0}^2  - \ii u_{0;x} ) -
\veps^{-4} \ii u_{0},
\\
f_t = {} & 2\veps w ( - 2f_{2}u_{0}^2 + f_{2}u_{12} ) + w u_{2;x} + \ii w
u_{1;x}  - 2 \ii f_{2}w u_{0}  + 4\ii f_{2}u_{0}^3  - 6\ii f_{2}u_{0} u_{12}  +
4f_{2}u_{0} u_{0;x} \\
{}&{}- \ii f_{2}u_{0;xx}  - f_{2}u_{12;x} - 4\ii f_{2}u_{2}u_{1}  - 4\ii u_{2}w
u_{0}  + 4u_{1}w u_{0}  + \veps^{-1}(5\ii u_{0} u_{2;x}  - 5u_{0} u_{1;x}
\\
{}&{}- u_{2;xx} - \ii u_{1;xx}  + f_{2}w  + 2f_{2}u_{0}^2 - f_{2}u_{12}  +
\ii f_{2}u_{0;x}  + u_{2}w  + 8u_{2}u_{0}^2 - 2u_{2}u_{12}  \\
{}&{}+ 4\ii u_{2}u_{0;x}  + \ii u_{1}w   + 8\ii u_{1}u_{0}^2  - 2\ii u_{1}u_{12}   -
4u_{1}u_{0;x}) + \veps^{-2}( - u_{2;x} - \ii u_{1;x}  + \ii f_{2}u_{0}  \\
{}&{}+ 3\ii u_{2}u_{0}   - 3u_{1}u_{0} ) - \veps^{-3}(f_{2} + u_{2} + \ii u_{1} ).
\end{align*}
The Cartan structure element is, therefore, %
\begin{align*}
  U_{\tilde{\cE}} = {}&{}\bar{\Id}_{\cC}(u_{0;\sigma})\frac{\dd}{\dd u_{0;\sigma}} 
  + \bar{\Id}_{\cC}(u_{1;\sigma})\frac{\dd}{\dd u_{1;\sigma}} + \bar{\Id}_{\cC}(u_{2;\sigma})\frac{\dd}{\dd u_{2;\sigma}}
  + \bar{\Id}_{\cC}(u_{12;\sigma})\frac{\dd}{\dd u_{12;\sigma}}\\
  {}&{}+ (\Id f - f_x\,\Id x - f_t\,\Id t)\frac{\dd}{\dd f}
  + (\Id w - w_x\,\Id x - w_t\,\Id t)\frac{\dd}{\dd w}.
\end{align*}
The reduction $u_1=u_2=0$ maps this covering over $N{=}2$, $a{=}4$ SKdV equation to the %
covering over higher
symmetry~\eqref{kb3} of the Kaup\/--\/Boussinesq equation, see above.
We lift the solution of equation~\eqref{eqFNXshadow} for the covering over higher symmetry~\eqref{kb3} of the
Kaup\/--\/Boussinesq equation to the covering over the $N{=}2$, $a{=}4$ SKdV equation.
That %
solution of equation~\eqref{eqFNXshadow} is the vector field 
\begin{multline*}
  X =   \veps^{-1}(
  - x \,{\dd}/{\dd x}
  - 3t \,{\dd}/{\dd t}
  + u_0 \,{\dd}/{\dd u_0}
  + \tfrac32 u_1 \,{\dd}/{\dd u_1}
  + \tfrac32 u_2 \,{\dd}/{\dd u_2} \\
  + 2u_{12} \,{\dd}/{\dd u_{12}}
  + \cdots 
  + 2 w \,{\dd}/{\dd w} + \tfrac32 f \,{\dd}/{\dd f}
  ).
\end{multline*}
It has been computed %
by solving equation~\eqref{eqFNXshadow}
explicitly %
with the help of analytic 
software%
~\cite{SsTools}.

We note again that --\,as we had it in the above two reductions of the $N{=}2$, $a{=}4$-SKdV\,-- we obtain the
vector field corresponding to the scaling symmetry of the underlying equation.%
\end{example}

\begin{rem}%
For scaling\/-\/invariant families of coverings depending on a parameter 
the homogeneity weight of which is not equal to zero,
one could always try to find a solution of equation~\eqref{eqFNXshadow} by properly extending %
the scaling symmetry of the underlying PDE,
cf.~\cite{JKIgonin2002,AVKVestnikMGU2002}.
\end{rem}

\section{Zero\/-\/curvature representations and coverings}\label{secZCRCovering}
\noindent
In this section we bring together the gauge geometry of zero\/-\/curvature representations from section~\ref{GradedMarvan} and the %
deformation of coverings as described in section~\ref{GradedFN}. 
In particular, we study the relation between the construction of parametric
families of zero\/-\/curvature representations and deformation of the corresponding %
nonlocalities %
by using the Fr\"o\-li\-cher\/--\/Nijenhuis bracket.
We let the geometry stay in the context of (super-)KdV type systems; therefore, we let $x^1=x$ and $x^2=t$ be the two independent variables.

Let $\alpha = a^i \rho(e_i)\,\mathrm{d}x + b^j \rho(e_j)\,\mathrm{d}t$ be 
a $\fg$-\/valued zero\/-\/curvature representation for the
system~$\cE$. Over its infinite prolongation~$\cEinf$,
construct a $(k_0|k_1)$-\/dimensional covering with 
the $k_0$ parity\/-\/even and $k_1$ parity\/-\/odd nonlocal fibre
variables~$w^\ell$ such that $\parity(w^\ell)=\bzero$ if $1\leqslant \ell\leqslant k_0$ and $\parity(w^\ell)=\bone$ if $\ell>k_0$ and such that
\begin{subequations} \label{covw} 
\begin{align}
 w^\ell_x & = - a^i \boldsymbol{\varrho}(e_i) \inner \Id w^\ell,\\
 w^\ell_t & = - b^j \boldsymbol{\varrho}(e_j) \inner \Id w^\ell, \quad \ell = 1\dots (k_0+k_1).
\end{align}
\end{subequations}
Consider two mappings: 
\begin{align}
\ddb_{\alpha} &= \bar{\Id}_h - 
\bbl \alpha, \cdot \bbr \colon
\fg\otimes\Lambda^{0}(\cEinf) \to \fg\otimes\Lambda^{1}(\cEinf),
\label{EqMMDiff}\\
\intertext{see~\cite{Marvan2002} and~\cite{Norway}, and} 
\partial_{U} &=
\fnl \cdot, U_{\lambda} \fnr \colon \Gamma T(\Lambda^{0}(\tilde{\cE})) \to \Gamma T(\Lambda^{1}(\tilde{\cE})),\notag
\end{align}
see~\cite{JKIgonin2002,JKFlatCon}.
We recall that the mappings $\ddb_{\alpha}$ and~$\partial_U$ yield the
horizontal~\cite{Marvan2002} and Cartan~\cite{JKIgonin2002} cohomologies, respectively. 
However, we claim that in the geometry at hand one of these two differentials is 
a particular instance of the other by virtue of the
switch $\rho\rightleftarrows\boldsymbol{\varrho}$ between the Lie superalgebra %
representations.

\begin{lemma}\label{lemmaCommute}
The following diagram is commutative\textup{:}
\[
\begin{CD}
  \fg\otimes\Lambda^{0}(\cEinf)
  @>\rho>>
  \Mat(k_0+1| k_1)\otimes\Lambda^{0}(\cEinf)
  @>{\ddb_{\alpha}}>>
  \Mat(k_0+1| k_1)\otimes\Lambda^{1}(\cEinf) \\
  @|        @VV{\nabla}V            @VV{\nabla}V \\
  \fg\otimes\Lambda^{0}(\cEinf)
  @>\boldsymbol{\varrho}>>
  \Gamma T(\Lambda^{0}(\tilde{\cE}))    
  @>{\partial_{U}}>>
  \Gamma T(\Lambda^{1}(\tilde{\cE})),
\end{CD}
\]%
where $\nabla = \boldsymbol{\varrho} \circ \rho^{-1}$ is a switch from the
representation~$\rho$ to the representation~$\boldsymbol{\varrho}$ for the Lie superalgebra~$\fg$.
\end{lemma}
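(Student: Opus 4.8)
The plan is to check the right\/-\/hand square elementwise, using that the left arrow is the identity and that $\nabla=\varrho\circ\rho^{-1}$ is precisely the change of representation. Hence it suffices to establish $\nabla\bigl(\partial_\alpha(\rho(Q))\bigr)=\partial_U\bigl(\varrho(Q)\bigr)$ for every $Q\in\Lambda^0(\cEinf)\otimes\fg$, and by $\BBR$\/-\/linearity I would reduce to $Q=f\cdot g$ with $f\in C^\infty(\cEinf)$ and $g\in\fg$. First I would note that $\nabla$ is well defined on the relevant arguments: writing $\alpha=\sum_i A_i\,dx^i$ with $A_i\in\Lambda^0(\cEinf)\otimes\fg$, the matrix $\bar{D}_{x^i}\rho(Q)-\bbl A_i,\rho(Q)\bbr$ again lies in $\rho(\fg)$, since $\fg$ is closed under the bracket and $\bar{D}_{x^i}$ differentiates only the scalar coefficients (the entries of $\rho(g)$ being constant), so that $\bar{D}_{x^i}\rho(Q)=\rho(\bar{D}_{x^i}Q)$.

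For the top path I would expand $\partial_\alpha(\rho(Q))=\sum_i dx^i\wedge\bigl(\bar{D}_{x^i}\rho(Q)-\bbl A_i,\rho(Q)\bbr\bigr)$ and then apply $\nabla$ termwise. Because both $\rho$ and $\varrho$ are representations, $\varrho$ intertwines the graded matrix commutator with the commutator of vector fields, $\varrho\bbl A_i,Q\bbr=\bbl\varrho(A_i),\varrho(Q)\bbr$; reading off from~\eqref{covw} that the vertical part of $\tilde{D}_{x^i}$ equals $-\varrho(A_i)$ (so that $w_{x^i}\partial_w=-\varrho(A_i)$ in the one\/-\/dimensional case), the top path becomes the vector\/-\/valued form $\sum_i dx^i\otimes\Psi_i$ with $\Psi_i=\varrho\bigl(\bar{D}_{x^i}Q-\bbl A_i,Q\bbr\bigr)$.

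For the bottom path I would use that $\varrho(Q)$ has form degree~$0$, so that the defining formula of the Fr\"olicher\/--\/Nijenhuis bracket collapses to the ordinary Lie derivative of a vector\/-\/valued form, $\partial_U(\varrho(Q))=\fnl\varrho(Q),U_\lambda\fnr=\Lie_{\varrho(Q)}U_\lambda$, which on $U_\lambda=\sum_A\eta^A\otimes Y_A$ acts by $\sum_A(\Lie_X\eta^A)\otimes Y_A+\sum_A\eta^A\otimes\bbl X,Y_A\bbr$. Writing $U_\lambda=\sum_{k,I,J}\omega^k_{I,J}\otimes\partial_{u^k_{I,J}}+\bigl(dw-\sum_i w_{x^i}\,dx^i\bigr)\otimes\partial_w+\dots$ and $X=\varrho(Q)=\Phi\,\partial_w$ with $\Phi=\varrho(Q)\inner dw$, I would compute $\Lie_X$ of each summand after re\/-\/expressing $d\Phi$ through the Cartan forms, so that its $dx^i$\/-\/coefficient is $\tilde{D}_{x^i}\Phi$. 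The key point is that the contributions proportional to the nonlocal Cartan form $dw-\sum_i w_{x^i}\,dx^i$ and to the jet forms $\omega^k_{I,J}$ cancel against the bracket terms $\bbl X,\partial_w\bbr$ and $\bbl X,\partial_{u^k_{I,J}}\bbr$, leaving exactly $\sum_i\bigl(\tilde{D}_{x^i}\Phi-\Phi\,\partial w_{x^i}/\partial w\bigr)\,dx^i\otimes\partial_w$.

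It then remains to match the two surviving expressions, i.e.\ to verify $\Psi_i\inner dw=\tilde{D}_{x^i}\Phi-\Phi\,\partial w_{x^i}/\partial w$. Splitting $\tilde{D}_{x^i}=\bar{D}_{x^i}+w_{x^i}\partial_w$ gives $\varrho(\bar{D}_{x^i}Q)\inner dw=\tilde{D}_{x^i}\Phi-w_{x^i}\,\partial\Phi/\partial w$, while $\bbl\varrho(A_i),\varrho(Q)\bbr=\bbl-w_{x^i}\partial_w,\Phi\,\partial_w\bbr$ contributes $-w_{x^i}\,\partial\Phi/\partial w+\Phi\,\partial w_{x^i}/\partial w$; subtracting yields the required identity term by term. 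I expect the principal obstacle to be the consistent bookkeeping of the $\BBZ_2$\/-\/graded signs: the factor $(-1)^{rs+\parity(\Omega)\parity(\Theta)}$ in the Fr\"olicher\/--\/Nijenhuis bracket, the sign in the graded commutator $\bbl A_i,Q\bbr$, and the convention $i_X(\omega)=(-1)^{\parity(X)\parity(\omega)}\omega(X)$ for the substitution must all be reconciled simultaneously, and the odd nonlocal variables $f^q$ (together with the fermionic jet summands) produce a $\partial_{f^q}$\/-\/sector that has to be carried through in parallel with the even $\partial_{w^p}$\/-\/sector without spoiling the cancellations above.
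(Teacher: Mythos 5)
Your proposal is correct and follows essentially the same route as the paper: both reduce to an element $q^k\otimes e_k$, expand $\partial_\alpha(\rho(\cdot))$ in coordinates, use that $\varrho$ intertwines the brackets together with formulas~\eqref{covw} identifying $w_{x^i}\partial_w$ with $-\varrho$ of the connection coefficients, and match this against $\fnl X,U_\lambda\fnr$ via the splitting $\tilde{D}_{x^i}=\bar{D}_{x^i}+w_{x^i}\partial_w$. The only difference is cosmetic: you derive the coordinate formula for $\fnl X,U_\lambda\fnr$ from the definition of the Fr\"olicher--Nijenhuis bracket, whereas the paper quotes it directly, and like the paper you defer the full bookkeeping of the odd sector and graded signs to a parallel extension of the even computation.
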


\begin{proof} 
Consider any $\gamma = q^k \cdot e_k \in \fg\otimes\Lambda^{0}(\cEinf)$ and put
$\boldsymbol{\varrho}(\gamma) = X\in \Gamma T(\Lambda^{0}(\tilde{\cE}))$ and $\rho(\gamma) =
Q\in \Mat(k_0+1| k_1)\otimes\Lambda^{0}(\cEinf)$. A direct calculation shows that
\begin{align*}
(\nabla \circ {}& \boldsymbol{\partial}_{\alpha} \circ \rho) (\gamma) =  (\nabla
\circ \boldsymbol{\partial}_{\alpha} ) (Q)  = \nabla ( \bar{\Id}_hQ - \bbl \alpha,  Q
\bbr) \\
{}={}& \nabla \left( \Id x\, ( \bar{D}_x(q^k)\rho(e_k) - \bbl a^i\rho(e_i),
q^k\rho(e_k) \bbr) + \Id t\, ( \bar{D}_t(q^k)\rho(e_k) - \bbl b^j\rho(e_j),
q^k\rho(e_k) \bbr \right)   \\
{}={}& \Id x\, \left(\bar{D}_x(q^k)\boldsymbol{\varrho}(e_k) + [-a^i\boldsymbol{\varrho}(e_i), q^k\boldsymbol{\varrho}(e_k)]\right) 
+ \Id t\,(\bar{D}_t(q^k)\boldsymbol{\varrho}(e_k) + [-b^i\boldsymbol{\varrho}(e_i),
q^k\boldsymbol{\varrho}(e_k)]). \\
\intertext{On the other hand, we have that}
(\partial_U \circ {} & \boldsymbol{\varrho})(\gamma) =  \partial_U X = \fnl X, U_\lambda \fnr\\
 {} = {}&
\left[\Id x \left(\tilde{D}_x (X \inner \Id w^\ell ) - (X \inner \Id w^s)\frac{\partial w^\ell_x}{\partial w^s}\right)
+ \Id t \left(\tilde{D}_t (X \inner \Id w^\ell) - (X \inner \Id w^s)\frac{\partial w^\ell_t}{\partial
  w^s}\right)\right]\otimes\frac{\partial}{\partial w^\ell}. \\
\intertext{By using the formula $\tilde{D}_x (X\inner \Id w^\ell) = \bar{D}_x (X\inner \Id w^\ell) + w^s_x\,
\frac{\partial}{\partial w^s} (X \inner \Id w^\ell)$, we continue the equality and obtain that}
{}%
= {} & \Bigl[\Id x \left(\bar{D}_x
(X\inner \Id w^\ell) +  w^s_x\frac{\partial}{\partial w^s} (X \inner \Id w^\ell)  -
(X\inner \Id w^s)\frac{\partial w^\ell_x}{\partial w^s}  \right) \\
{} &  + \Id t \left(\bar{D}_t (X \inner \Id w^\ell)  +  w^s_t\frac{\partial}{\partial w^s}(X \inner \Id w^\ell) - (X \inner \Id w^s)\frac{\partial w^\ell_t}{\partial
  w^s}\right)\Bigr]\otimes\frac{\partial}{\partial w^\ell}  \\
= 
{} & 
  \Bigr[ 
       \Id x \left(\bar{D}_x(X)\inner \Id w^\ell  + \bbl w^s_x\frac{\partial}{\partial w^s}, X \bbr \inner \Id w^\ell \right) 
     + \Id t \left(\bar{D}_t(X)\inner \Id w^\ell +  \bbl w^s_t\frac{\partial}{\partial w^s}, X \bbr \inner \Id w^\ell \right) 
  \Bigl] 
  \otimes\frac{\partial}{\partial w^\ell}. \\
\intertext{Recall that the covering at hand is 
obtained by switching between Lie algebra's representations. 
From formulas~\eqref{covw} we infer that the equality continues,}
{}%
= {} & \Bigr[ \Id x (\bar{D}_x(q^k)\boldsymbol{\varrho}(e_k) + [-a^i\boldsymbol{\varrho}(e_i), q^k\boldsymbol{\varrho}(e_k)])\inner \Id w^\ell\\
{} &  + \Id t
(\bar{D}_t(q^k)\boldsymbol{\varrho}(e_k) + [-b^i\boldsymbol{\varrho}(e_i),
q^k\boldsymbol{\varrho}(e_k)])\inner \Id w^\ell )\Bigr]\otimes\frac{\partial}{\partial w^\ell} \\
{}  = {}& \Id x
\left(\bar{D}_x(q^k)\boldsymbol{\varrho}(e_k) + [-a^i\boldsymbol{\varrho}(e_i), q^k\boldsymbol{\varrho}(e_k)]\right) 
+ \Id t (\bar{D}_t(q^k)\boldsymbol{\varrho}(e_k) + [-b^i\boldsymbol{\varrho}(e_i),
q^k\boldsymbol{\varrho}(e_k)]).
\end{align*}
We finally obtain that $(\nabla \circ \ddb_{\alpha} \circ \rho)
(\gamma) =  (\partial_U \circ \boldsymbol{\varrho})(\gamma)$ for all~$\gamma$, which proves our claim.
\end{proof}

Now let us study in more detail the case of \emph{removable} parameters.
Let $\alpha(\lambda) = a^i \rho(e_i)\,\mathrm{d}x + b^j \rho(e_j)\,\mathrm{d}t$ be a smooth family of $\fg$-\/valued zero\/-\/curvature representations for the
system~$\cE$ but let the parameter $\lambda\in\mathcal{I}\subseteq\BBC$ be removable by using a smooth family of gauge transformations~$S_\lambda$.
By Remark~\ref{RemAlmostConverse}, %
there are $\fg$-\/matrices~$Q = q^k\rho(e_k)$ such that
\[%
\frac{\mathrm{d}}{\mathrm{d}\lambda} \alpha = \bar{\Id}_h Q - [\alpha, Q].
\]%
In components, we have that%
\begin{subequations}\label{MarvanEqComponent}
\begin{align}
    \frac{\mathrm{d}}{\mathrm{d}\lambda}(a^i) \rho(e_i) &=  \bar{D}_x (q^k)\rho(e_k) - a^iq^k\rho([e_i, e_k]),\\
    \frac{\mathrm{d}}{\mathrm{d}\lambda}(b^j) \rho(e_i) &=  \bar{D}_t (q^k)\rho(e_k) - b^jq^k\rho([e_j, e_k]).
\end{align}
\end{subequations}
By virtue of the representation~$\boldsymbol{\varrho}$, at every~$\lambda$
the $\fg$-matrix $Q=q^k \rho(e_k)$ determines the vector field $X =
q^k \boldsymbol{\varrho}(e_k)$ on~$\tilde{\cE}$.

The following proposition is a regular generator of solutions for
equation~\eqref{eqFNXshadow} in the case of coverings derived from 
zero\/-\/curvature representations with removable parameters.
It was remarked 
in~\cite{KKIgonin2003} that the formalism of zero\/-\/curvature representations and 
their parametric families can be viewed as a special case of 
the Fr\"olicher\/--\/Nijenhuis bracket formalism for deformations of coverings 
of generic %
nature; we thus substantiate that claim 
from \textit{loc.\ cit.} by giving an explicit proof.

\begin{prop}\label{propFNMarvan}
The vector field~$X=q^k \boldsymbol{\varrho}(e_k)$ satisfies structure equation~\eqref{eqFNXshadow}.
\end{prop}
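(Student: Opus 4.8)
The plan is to deduce the structure equation from the commutative diagram of Lemma~\ref{lemmaCommute} by applying the switch $\nabla$ to the Marvan equation~\eqref{MarvanEq}. First I would record the hypotheses in the form that feeds the diagram: since $\lambda$ is removable, Proposition~\ref{PropNonRem} supplies the $\fg$-\/matrix $Q = q^k\rho(e_k) = \rho(\gamma)$, with $\gamma = q^k\otimes e_k$, satisfying $\bar{d}_h Q - \bbl\alpha,Q\bbr = \tfrac{\Id}{\Id\lambda}\alpha$. The associated vector field is exactly $X = q^k\varrho(e_k) = \varrho(\gamma)$.

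Next I would apply $\nabla = \varrho\circ\rho^{-1}$ to both sides of~\eqref{MarvanEq}. The right\/-\/hand side becomes $\nabla\bigl(\partial_{\alpha}(\rho(\gamma))\bigr) = (\nabla\circ\partial_{\alpha}\circ\rho)(\gamma)$, which by the commutativity established in Lemma~\ref{lemmaCommute} equals $(\partial_{U}\circ\varrho)(\gamma) = \fnl X, U_{\lambda}\fnr$. Thus the target bracket in~\eqref{eqFNXshadow} is produced for free once the diagram is invoked, and the only remaining task is to identify the image of the left\/-\/hand side under $\nabla$.

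The key computational step, and the point where the geometry must be checked by hand, is to verify that $\nabla\bigl(\tfrac{\Id}{\Id\lambda}\alpha\bigr) = \tfrac{\Id}{\Id\lambda}U_{\lambda}$. Writing $\tfrac{\Id}{\Id\lambda}\alpha = \tfrac{\Id}{\Id\lambda}(a^i)\,\rho(e_i)\,dx + \tfrac{\Id}{\Id\lambda}(b^j)\,\rho(e_j)\,dt$ and applying $\nabla$ replaces each matrix $\rho(e_i)$ by the vector field $\varrho(e_i)$, giving $\tfrac{\Id}{\Id\lambda}(a^i)\,\varrho(e_i)\,dx + \tfrac{\Id}{\Id\lambda}(b^j)\,\varrho(e_j)\,dt$. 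On the other hand, in the characteristic element $U_{\lambda}$ the only $\lambda$-\/dependent pieces are the nonlocal components $(dw - w_x\,dx - w_t\,dt)\,\partial/\partial w$, so $\tfrac{\Id}{\Id\lambda}U_{\lambda} = \bigl(-\tfrac{\Id}{\Id\lambda}w_x\,dx - \tfrac{\Id}{\Id\lambda}w_t\,dt\bigr)\,\partial/\partial w$. Now I would substitute the covering relations~\eqref{covw}, namely $w_x = -a^i\,\varrho(e_i)\inner dw$ and $w_t = -b^j\,\varrho(e_j)\inner dw$; since $\varrho$ is a fixed, $\lambda$-\/independent representation, differentiating in $\lambda$ moves the derivative onto the coefficients $a^i,b^j$ alone, and using $\varrho(e_i) = (\varrho(e_i)\inner dw)\,\partial/\partial w$ in the one\/-\/dimensional case $k_0=1$, $k_1=0$ reproduces precisely $\tfrac{\Id}{\Id\lambda}(a^i)\,\varrho(e_i)\,dx + \tfrac{\Id}{\Id\lambda}(b^j)\,\varrho(e_j)\,dt$, under the same identification of derivation\/-\/valued one\/-\/forms used in the proof of Lemma~\ref{lemmaCommute}.

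Combining the two computations gives $\tfrac{\Id}{\Id\lambda}U_{\lambda} = \nabla\bigl(\tfrac{\Id}{\Id\lambda}\alpha\bigr) = \nabla(\partial_{\alpha}Q) = \fnl X, U_{\lambda}\fnr$, which is exactly the structure equation~\eqref{eqFNXshadow}. I expect the main obstacle to lie in the bookkeeping of the identity $\nabla\bigl(\tfrac{\Id}{\Id\lambda}\alpha\bigr) = \tfrac{\Id}{\Id\lambda}U_{\lambda}$: one must ensure that the passage from the matrix $\rho(e_i)$ to the derivation $\varrho(e_i)$ is compatible with the contraction\/-\/with\/-\/$dw$ normalization built into~\eqref{covw} and into the characteristic element, so that no spurious factor is introduced and the matching is literal rather than merely up to scale.
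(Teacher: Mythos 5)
Your proposal is correct and follows essentially the same route as the paper: both rest on Lemma~\ref{lemmaCommute} to identify $\fnl X, U_\lambda\fnr$ with the image of $\partial_\alpha Q$ under the representation switch, and both then use the component form~\eqref{MarvanEqComponent} of the Marvan equation together with the covering relations~\eqref{covw} to recognize the result as $\tfrac{\Id}{\Id\lambda}U_\lambda$. The only difference is organizational --- you apply $\nabla$ to both sides of~\eqref{MarvanEq} and check $\nabla(\dot\alpha)=\dot U_\lambda$ separately, whereas the paper starts from the bracket $\fnl X,U_\lambda\fnr$ and substitutes --- but the computation and the normalization point you flag are the same.
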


\begin{proof} 
From Lemma~\ref{lemmaCommute} we infer that
\begin{align*}
\fnl X, U_\lambda \fnr  = {} & \Bigr[ \Id x \left(\bar{D}_x(q^k)\boldsymbol{\varrho}(e_k) + [-a^i\boldsymbol{\varrho}(e_i), q^k\boldsymbol{\varrho}(e_k)]\right)\inner \Id w^\ell\\
{} &  + \Id t
(\bar{D}_t \left(q^k)\boldsymbol{\varrho}(e_k) + [-b^i\boldsymbol{\varrho}(e_i),
q^k\boldsymbol{\varrho}(e_k)]\right)\inner \Id w^\ell \Bigr]\otimes\frac{\partial}{\partial w^\ell}.\\
\intertext{Using~\eqref{MarvanEqComponent}, we obtain that}
{}& \Bigl[ \Id x\,\frac{\Id}{\Id \lambda}(a^i)(\boldsymbol{\varrho}(e_i)\inner
  \Id w^\ell) + \Id t\,\frac{\Id}{\Id \lambda}(b^i)(\boldsymbol{\varrho}(e_i)\inner \Id w^\ell)
  \Bigr]\otimes\frac{\partial}{\partial w^\ell} \\
  {} = {}& \left[ -\frac{\Id}{\Id \lambda} (w^\ell_x)\,\Id x -\frac{\Id}{\Id \lambda} (w^\ell_t)\,\Id t \right] \otimes
  \frac{\partial}{\partial w^\ell} = \frac{\Id}{\Id \lambda} U_\lambda.
\end{align*}
This proves that the vector field $X$ is 
a solution of equation~\eqref{eqFNXshadow}.
\end{proof}

\begin{rem}
This proof can easily be extended to the case of any finite~$n$.
\end{rem}

We refer to~\cite{Catalano2015} for discussion of the set\/-\/up when a symmetry $X\in\Gamma (T\cEinf)$ of an equation~$\cE$ can be used to extend a given zero\/-\/curvature representation for~$\cE$ to a ``nontrivial'' family of zero\/-\/curvature
representations. We expect that this result must have a straightforward generalisation to the $\BBZ_2$-\/graded case.

\begin{cor}\label{corFNrem}
Let $\tau\colon\tilde{\cE}_\lambda\to\cE$ be a family of coverings %
for the family of $\fg$-\/valued zero\/-\/curvature
representations~$\alpha_\lambda$ \textup{(}smoothly depending on the
parameter~$\lambda$\textup{)}.
If equation~\eqref{eqFNXshadow} has no $\tau$-\/vertical solutions,
then the parameter~$\lambda$ in~$\alpha_\lambda$ is not removable by gauge transformations.
\end{cor}

\begin{example}%
Let us illustrate the claim of Proposition~\ref{propFNMarvan}. Namely, let
us construct a $(1|1)$-\/dimensional covering over the $N{=}2$,
$a{=}4$ SKdV equation~\eqref{SKdVComponents} by taking %
the $\gsl(2|1)$\/-\/valued 
zero\/-\/curvature representation~$\beta$ from %
Example~\ref{exN2ZCRRem} on p.~\pageref{exN2ZCRRem}. 
Using representation~$\boldsymbol{\varrho}$ from Example~\ref{exProjSub}, we obtain
\begin{align*}
w_x = {} & \lambda^2 + 2\lambda w  + w^2 + u_{0}^2 + u_{12}  - f_{2}u_{2}
+  \ii f_{2}u_{1},\\
f_x = {} & \lambda f_{2} + f_{2}w  +  \ii  f_{2}u_{0}  + u_{2} +  \ii  u_{1}, \\
w_t = {} & 2\lambda^2(2u_{0}^2 - u_{12} ) + \lambda(8w u_{0}^2 - 4w
u_{12}  + 8u_{0} u_{0;x} - 2u_{12;x} + fu_{2;x} - \ii fu_{1;x}  \\
{}&{}+ 4\ii fu_{2}u_{0}   + 4fu_{1}u_{0} ) + 4w^2u_{0}^2 - 2w^2u_{12}  +
8w u_{0} u_{0;x} - 2w u_{12;x} + 4u_{0}^4 + 2u_{0}^2u_{12} \\
{}&{} + 4u_{0} u_{0;xx} - 2u_{12}^2 + 4u_{0;x}^2 - u_{12;xx} + fw
u_{2;x} - \ii fw u_{1;x}  + 5\ii fu_{0} u_{2;x}  + 5fu_{0} u_{1;x}\\
{}&{} + fu_{2;xx} - \ii fu_{1;xx}  + 4\ii fu_{2}w u_{0}   - 8fu_{2}u_{0}^2 +
2fu_{2}u_{12}  + 4\ii fu_{2}u_{0;x}  + 4fu_{1}w u_{0} \\
{}&{}  + 8\ii fu_{1}u_{0}^2  -
2\ii fu_{1}u_{12}   + 4fu_{1}u_{0;x} + u_{2}u_{2;x} + 8u_{2}u_{1}u_{0}  +
u_{1}u_{1;x},\\
f_t = {} & \lambda( - u_{2;x} - \ii u_{1;x}  + 4fu_{0}^2 - 2fu_{12}  +
4\ii u_{2}u_{0}   - 4u_{1}u_{0} ) - w u_{2;x} - \ii w u_{1;x}  
+ 5\ii u_{0} u_{2;x}  \\
{} & {} - 5u_{0} u_{1;x} - u_{2;xx} - \ii u_{1;xx}  + 4fw u_{0}^2 - 2fw
u_{12}  + 4\ii fu_{0}^3  - 6\ii fu_{0} u_{12}   + 4fu_{0} u_{0;x} \\ 
{}&{}- fu_{0;xx} \ii  - fu_{12;x} - 4fu_{2}u_{1} \ii  + 4u_{2}w u_{0}  \ii  +
8u_{2}u_{0}^2 - 2u_{2}u_{12}  + 4\ii u_{2}u_{0;x}  - 4u_{1}w u_{0} \\
{}&{} + 8\ii u_{1}u_{0}^2  - 2\ii u_{1}u_{12}   - 4u_{1}u_{0;x}.
\end{align*}
In agreement with Proposition~\ref{propFNMarvan}, we find the solution $X = {\partial}/{\partial w}$ of
equation~\eqref{eqFNXshadow}:\ in\-deed,\ this field is obtained from the $\gsl(2|1)$\/-matrix~$Q$ which we
introduced in Example~\ref{exN2ZCRRem}.
Let us finally note that this example of vector field~$X$ is \emph{not} the infinitesimal generator of a scaling.
\end{example}

\begin{example}\label{Ex98}
  An inverse recursion operator has been obtained for fifth-\/order evolution superequation~\eqref{eq5ordF} in
  components %
  in~\cite{cookbook}.\footnote{A recursion operator, formulated for~\eqref{eq5ordF} in terms of superfields
    and superderivatives, was conjectured in~\cite{TianLiu5ord}.}
A known relation between the inverse recursion operators and zero-\/curvature representations
(see~\cite{BaranMarvan2006} for details) yields a new family of $\fgl(9|8)$-\/valued zero-\/cur\-va\-tu\-re
representations for~\eqref{eq5ordF}.
Denote by~$\lambda$ the parameter under study.
We now realise this family of zero\/-\/curvature representations as a family of 
$(9|8)$-\/dimensional linear coverings over~\eqref{eq5ordF}. 
Seventeen new nonlocalities are introduced;
the variables $S$, $W_1$, $W_2$, $W_3$, $V_1$, $\ldots$, $V_5$ are parity-\/even 
and the variables $Q_1$, $Q_2$, $Q_3$, $O_1$, $\ldots$, $O_5$ are parity-\/odd.
At every $\lambda$, the derivatives of the new %
variables are given by the formulas
\begin{align*}
  W_{3;x} = {}&{}  - O_3 \xi  - 3O_1 \xi u  - 9Q_2 \xi u  + 3Q u \xi_{x} + 2V_2 u  + S (6u^2 + 3\xi \xi_{x}),\\
  W_{2;x} = {}&{} 2O_1 \xi  + 2S u,\\
  W_{1;x} = {}& S,\quad
  S_x = {} V_1,\quad
  V_{1;x} ={} V_2,\quad
  V_{2;x} ={} V_3,\quad
  V_{3;x} ={} V_4,\quad
  V_{4;x} ={} V_5,\\
  V_{5;x} ={}& 6O_4 \xi_{x} + O_3 (3\xi_{xx} - 18\xi u ) + O_2 (6u \xi_{x} - 4\xi_{xxx} - 29\xi u_{x}) + O_1
               (6u \xi_{xx} - 4\xi_{4x}\\
  {}&{}+ 11\xi_{x}u_{x} - 36\xi u^2 - 15\xi u_{xx}) + ( - 3Q_3 \xi_{x})/2 + Q_2 (21u \xi_{xx} + 3\xi_{4x} + 21\xi_{x}u_{x} + 9\xi u^2)\\
  {}&{}+ Q (51u^2\xi_{x} + 12u \xi_{xxx} - 2\xi_{5x} + 26\xi_{xx}u_{x} + 22\xi_{x}u_{xx} - 18\xi u u_{x} + 2\xi u_{xxx})/2 - 12V_4 u\\
  {}&{}- 30V_3 u_{x} + V_2 ( - 48u^2 - 40u_{xx} + 15\xi \xi_{x}) + V_1 ( - 140u u_{x} - 30u_{xxx} + 27\xi \xi_{xx}) - 2W_3 u_{x}\\
  {}&{}+ W_2 ( - 12u u_{x} - 2u_{xxx} - 3\xi \xi_{xx}) + W_1 ( - 60u^2u_{x} - 20u u_{xxx} - 2u_{5x} - 40u_{xx}u_{x}\\
  {}&{}+ 10\xi_{xxx}\xi_{x} + 30\xi u \xi_{xx} + 30\xi \xi_{x}u_{x}) + S ( - 64u^3 - 96u u_{xx} - 12u_{4x} - 72u_{x}^2 + 8\xi_{xx}\xi_{x}\\
  {}&{}+ 90\xi u \xi_{x} + 15\xi \xi_{xxx} - \lambda ),\\
  Q_{3;x} ={}& 2O_3 u  + 7O_1 u^2 + Q_2 (6u^2 - 6\xi \xi_{x}) - 2V_3 \xi  - 6W_2 \xi u  + 14S u \xi_{x},\\
  Q_{2;x} ={}& Q u  + S \xi,\\
  Q_x = {}& O_1,\quad
  O_{1;x} ={} O_2,\quad
  O_{2;x} ={} O_3,\quad
  O_{3;x} ={} O_4,\quad
  O_{4;x} ={} O_5,\\
  O_{5;x} ={}& - 12O_4 u  - 24O_3 u_{x} + O_2 ( - 27u^2 - 19u_{xx} - 2\xi \xi_{x}) + O_1 ( - 63u u_{x} - 7u_{xxx})\\
  {}&{}- 3/2Q_3 u + Q_2 ( - 21u u_{x} - 3u_{xxx}) + Q ( - 35u^3 - 48u u_{xx} - 2u_{4x} - 40u_{x}^2 + 6\xi_{xx}\xi_{x}\\
  {}&{}- 4\xi \xi_{xxx} - 2\lambda )/2 - 6V_3 \xi_{x} + V_2 ( - 21\xi_{xx} - 21\xi u ) + V_1 ( - 50u \xi_{x} - 23\xi_{xxx} - 27\xi u_{x})\\
  {}&{}- 2W_3 \xi_{x} + W_2 (6u \xi_{x} + \xi_{xxx} + 3\xi u_{x}) + W_1 ( - 30u^2\xi_{x} - 20u \xi_{xxx} - 2\xi_{5x} - 30\xi_{xx}u_{x}\\
  {}&{}- 10\xi_{x}u_{xx} - 30\xi u u_{x}) + S ( - 57u \xi_{xx} - 11\xi_{4x} - 52\xi_{x}u_{x} - 36\xi u^2 - 15\xi u_{xx}),\\
\end{align*}
\begin{align*}
  S_t = {}& - 5O_3 \xi_{x} + 15O_2 \xi u  + O_1 (5\xi_{xxx} + 15\xi u_{x}) + Q ( - 15u \xi_{xx} - 15\xi_{x}u_{x}) + V_5  + 10V_3 u\\
  {}&{}+ 20V_2 u_{x} + V_1 (30u^2 + 20u_{xx} - 15\xi \xi_{x}) + S (60u u_{x} + 10u_{xxx} - 15\xi \xi_{xx}),\\
  Q_t = {}&  O_5  + 10O_3 u  + 15O_2 u_{x} + O_1 (15u^2 + 5u_{xx}) + 15Q u u_{x} + 5V_2 \xi_{x} + V_1 (15\xi_{xx} + 15\xi u )\\
  {}&{}+ S (30u \xi_{x} + 10\xi_{xxx} + 15\xi u_{x}),\\
  V_{1;t} ={}& (S_t)_x,\quad
  V_{2;t} ={} (V_{1;t})_x,\quad
  V_{3;t} ={} (V_{2;t})_x,\quad
  V_{4;t} ={} (V_{3;t})_x,\quad
  V_{5;t} ={} (V_{4;t})_x,\\
  O_{1;t} ={}& (Q_t)_x,\quad
  O_{2;t} ={} (O_{1;t})_x,\quad
  O_{3;t} ={} (O_{2;t})_x,\quad
  O_{4;t} ={} (O_{3;t})_x,\quad
  O_{5;t} ={} (O_{4;t})_x,\\
  W_{1;t} = {}& - 5O_2 \xi_{x} + O_1 (5\xi_{xx} + 15\xi u ) - 15Q u \xi_{x} + V_4  + 10V_2 u  + 10V_1 u_{x}\\
  {}&{}+ S (30u^2 + 10u_{xx} - 15\xi \xi_{x}),\\
  W_{2;t} = {}& 2O_5 \xi  - 2O_4 \xi_{x} + O_3 (2\xi_{xx} + 20\xi u ) + O_2 ( - 30u \xi_{x} - 2\xi_{xxx} + 30\xi u_{x}) + O_1 (30u \xi_{xx}\\
  {}&{}+ 2\xi_{4x} + 60\xi u^2 + 10\xi u_{xx}) + Q ( - 30u^2\xi_{x} + 30\xi u u_{x}) + 2V_4 u  - 2V_3 u_{x} + V_2 (20u^2\\
  {}&{}+ 2u_{xx} - 10\xi \xi_{x}) + V_1 ( - 2u_{xxx} - 30\xi \xi_{xx}) + S (60u^3 + 40u u_{xx} + 2u_{4x} - 30\xi_{xx}\xi_{x}\\
  {}&{}- 120\xi u \xi_{x} - 20\xi \xi_{xxx}),\\
  Q_{2;t} = {}&  O_4 u  - O_3 u_{x} + O_2 (10u^2 + u_{xx} + 5\xi \xi_{x}) + O_1 ( - 5u u_{x} - u_{xxx} - 5\xi \xi_{xx}) + Q (15u^3 \\
  {}&{}+ 10u u_{xx} + u_{4x} + 5u_{x}^2 - 5\xi_{xx}\xi_{x}) + V_4 \xi  - V_3 \xi_{x} + V_2 (\xi_{xx} + 10\xi u ) + V_1 ( - 5u \xi_{x}\\
  {}&{}- \xi_{xxx} + 10\xi u_{x}) + S (20u \xi_{xx} + \xi_{4x} - 5\xi_{x}u_{x} + 45\xi u^2 + 10\xi u_{xx}), \\
  W_{3;t} ={}& O_5 ( - \xi_{xx} - \xi u ) + O_4 (6u \xi_{x} + \xi_{xxx} + 4\xi u_{x}) + O_3 ( - 20u \xi_{xx} - \xi_{4x} + 2\xi_{x}u_{x} - 30\xi u^2\\
  {}&{}- 5\xi u_{xx}) + O_2 (54u^2\xi_{x} + 8u \xi_{xxx} - 15\xi_{xx}u_{x} - 2\xi_{x}u_{xx} + 11\xi u u_{x} + 4\xi u_{xxx})\\
  {}&{}+ O_1 ( - 57u^2\xi_{xx} - 4u \xi_{4x} + 4u \xi_{x}u_{x} - \xi_{xxx}u_{x} - 2\xi_{xx}u_{xx} - 2\xi_{x}u_{xxx} - 89\xi u^3\\
  {}&{}- 9\xi u u_{xx} - 4\xi u_{x}^2 - 3\xi \xi_{xx}\xi_{x} + \xi \lambda ) + Q_3 ( - 9u \xi_{x} + 3\xi u_{x})/2 + Q_2 ( - 48u^2\xi_{xx}\\
  {}&{}- 3u \xi_{4x} + 66u \xi_{x}u_{x} + 9\xi_{xxx}u_{x} - 9\xi_{xx}u_{xx} + 6\xi_{x}u_{xxx} - 108\xi u^3 - 69\xi u u_{xx} - 6\xi u_{4x}\\
  {}&{}- 24\xi u_{x}^2 + 45\xi \xi_{xx}\xi_{x}) + Q (157u^3\xi_{x} + 42u^2\xi_{xxx} + 2u \xi_{5x} - 32u \xi_{xx}u_{x} + 38u \xi_{x}u_{xx}\\
  {}&{}- 6\xi_{4x}u_{x} + 6\xi_{xxx}u_{xx} - 6\xi_{xx}u_{xxx} + 4\xi_{x}u_{4x} + 44\xi_{x}u_{x}^2 - 2\xi_{x}\lambda  + 111\xi u^2u_{x} + 28\xi u u_{xxx}\\
  {}&{}+ 2\xi u_{5x} + 38\xi u_{xx}u_{x} - 14\xi \xi_{xxx}\xi_{x})/2 - 2V_5 u_{x} + V_4 (2u^2 + 2u_{xx} + 2\xi \xi_{x})\\
  {}&{}+ V_3 ( - 32u u_{x} - 2u_{xxx} - 5\xi \xi_{xx}) + V_2 (24u^3 + 32u u_{xx} + 2u_{4x} - 28u_{x}^2 + \xi_{xx}\xi_{x}\\
  {}&{}+ 15\xi u \xi_{x} + 4\xi \xi_{xxx}) + V_1 ( - 100u^2u_{x} - 12u u_{xxx} - 16u_{xx}u_{x} - 7\xi_{xxx}\xi_{x} - 50\xi u \xi_{xx}\\
  {}&{}- 2\xi \xi_{4x} + 18\xi \xi_{x}u_{x}) + W_3 ( - 4u u_{x} - 2\xi \xi_{xx}) + W_2 ( - 24u^2u_{x} - 4u u_{xxx} + \xi_{xxx}\xi_{x}\\
  {}&{}+ \xi \xi_{4x} + 12\xi \xi_{x}u_{x}) + W_1 ( - 120u^3u_{x} - 40u^2u_{xxx} - 4u u_{5x} - 80u u_{xx}u_{x} - 2\xi_{5x}\xi_{x}\\
  {}&{}- 30\xi_{xx}\xi_{x}u_{x} + 30\xi u^2\xi_{xx} - 20\xi u \xi_{4x} - 60\xi u \xi_{x}u_{x} - 2\xi \xi_{6x} - 50\xi \xi_{xxx}u_{x} - 40\xi \xi_{xx}u_{xx}\\
  {}&{}- 10\xi \xi_{x}u_{xxx}) + S (52u^4 + 108u^2u_{xx} + 8u u_{4x} - 144u u_{x}^2 + 97u \xi_{xx}\xi_{x} - 2u \lambda\\
  {}&{}- 32u_{xxx}u_{x} + 16u_{xx}^2 + 5\xi_{4x}\xi_{x} - 16\xi_{xxx}\xi_{xx} + 192\xi u^2\xi_{x} + 33\xi u \xi_{xxx} - 46\xi \xi_{xx}u_{x}\\
  {}&{}+ 19\xi \xi_{x}u_{xx}),
\end{align*}      
\begin{align*}
Q_{3;t} ={}&  O_5 (3u^2 + 2u_{xx} + 2\xi \xi_{x}) + O_4 ( - 12u u_{x} - 2u_{xxx} - 2\xi \xi_{xx}) + O_3 (46u^3 + 38u u_{xx}\\
  {}&{}+ 2u_{4x} + 12u_{x}^2 - 14\xi_{xx}\xi_{x} + 6\xi u \xi_{x} - 2\xi \xi_{xxx}) + O_2 ( - 59u^2u_{x} - 16u u_{xxx} + 26u_{xx}u_{x}\\
  {}&{}+ 2\xi_{xxx}\xi_{x} - 10\xi u \xi_{xx} + 4\xi \xi_{4x} + 66\xi \xi_{x}u_{x}) + O_1 (109u^4 + 121u^2u_{xx} + 8u u_{4x} - 12u u_{x}^2\\
  {}&{}- 12u \xi_{xx}\xi_{x} - 2u \lambda  - 10u_{xxx}u_{x} + 12u_{xx}^2 + 2\xi_{4x}\xi_{x} - 10\xi_{xxx}\xi_{xx} - 78\xi u^2\xi_{x}\\
  {}&{}- 18\xi u \xi_{xxx} - 54\xi \xi_{xx}u_{x} + 14\xi \xi_{x}u_{xx}) - 3Q_3 \xi \xi_{xx} + Q_2 (72u^4 + 78u^2u_{xx} + 6u u_{4x}\\
  {}&{}- 138u \xi_{xx}\xi_{x} - 6u_{xxx}u_{x} + 6u_{xx}^2 - 6\xi_{4x}\xi_{x} + 12\xi_{xxx}\xi_{xx} - 198\xi u^2\xi_{x} - 18\xi u \xi_{xxx}\\
  {}&{}- 6\xi \xi_{xx}u_{x} + 12\xi \xi_{x}u_{xx}) + Q ( - 67u^3u_{x} - 30u^2u_{xxx} - 2u u_{5x} + 4u u_{xx}u_{x} + 10u \xi_{xxx}\xi_{x}\\
  {}&{}+ 2u_{4x}u_{x} + 12u_{x}^3 + 2u_{x}\lambda  - 2\xi_{5x}\xi_{x} - 64\xi_{xx}\xi_{x}u_{x} + 153\xi u^2\xi_{xx} - 10\xi u \xi_{4x} + 66\xi u \xi_{x}u_{x}\\
  {}&{}- 2\xi \xi_{6x} - 54\xi \xi_{xxx}u_{x} - 36\xi \xi_{xx}u_{xx} - 10\xi \xi_{x}u_{xxx}) + V_5 ( - 2\xi_{xx} + 4\xi u ) + V_4 (8u \xi_{x}\\
  {}&{}+ 2\xi_{xxx} + 4\xi u_{x}) + V_3 ( - 38u \xi_{xx} - 2\xi_{4x} - 12\xi_{x}u_{x} + 24\xi u^2) + V_2 (79u^2\xi_{x} + 16u \xi_{xxx}\\
  {}&{}- 40\xi_{xx}u_{x} + 14\xi_{x}u_{xx} + 120\xi u u_{x}) + V_1 ( - 123u^2\xi_{xx} - 12u \xi_{4x} - 52u \xi_{x}u_{x} + 24\xi_{xxx}u_{x}\\
  {}&{}- 12\xi_{xx}u_{xx} - 14\xi_{x}u_{xxx} + 101\xi u^3 + 142\xi u u_{xx} + 4\xi u_{4x} + 64\xi u_{x}^2 - 82\xi \xi_{xx}\xi_{x} + 2\xi \lambda )\\
  {}&{}+ W_3 ( - 4u \xi_{xx} + 4\xi u_{xx}) + W_2 ( - 48u^2\xi_{xx} - 4u \xi_{4x} + 12u \xi_{x}u_{x} + 4\xi_{xxx}u_{x} - 6\xi_{xx}u_{xx}\\
  {}&{}+ 2\xi_{x}u_{xxx} - 72\xi u^3 - 30\xi u u_{xx} - 2\xi u_{4x} - 12\xi u_{x}^2 + 18\xi \xi_{xx}\xi_{x}) + W_1 ( - 60u^3\xi_{xx}\\
  {}&{}- 40u^2\xi_{4x} - 240u^2\xi_{x}u_{x} - 4u \xi_{6x} - 60u \xi_{xxx}u_{x} - 80u \xi_{xx}u_{xx} - 60u \xi_{x}u_{xxx} + 4\xi_{5x}u_{x}\\
  {}&{}+ 60\xi_{xx}u_{x}^2 - 4\xi_{x}u_{5x} - 60\xi_{x}u_{xx}u_{x} + 60\xi u^2u_{xx} + 40\xi u u_{4x} + 240\xi u u_{x}^2 + 120\xi u \xi_{xx}\xi_{x}\\
  {}&{}+ 4\xi u_{6x} + 120\xi u_{xxx}u_{x} + 80\xi u_{xx}^2 - 20\xi \xi_{4x}\xi_{x} - 20\xi \xi_{xxx}\xi_{xx}) + S (358u^3\xi_{x}\\
  {}&{}+ 138u^2\xi_{xxx} + 8u \xi_{5x} - 192u \xi_{xx}u_{x} + 244u \xi_{x}u_{xx} - 12\xi_{4x}u_{x} + 34\xi_{xxx}u_{xx} - 34\xi_{xx}u_{xxx}\\
  {}&{}+ 10\xi_{x}u_{4x} - 12\xi_{x}u_{x}^2 - 2\xi_{x}\lambda  + 549\xi u^2u_{x} + 102\xi u u_{xxx} + 8\xi u_{5x}\\
  {}&{}+ 224\xi u_{xx}u_{x} - 48\xi \xi_{xxx}\xi_{x}),
\end{align*}
For this family of coverings over~\eqref{eq5ordF}, 
the solution of equation~\eqref{eqFNXshadow} is given by the vector field
\begin{multline}\label{X5ord}
  X = \lambda^{-1}\Bigl(
  x\frac{\dd}{\dd x} + 5t\frac{\dd}{\dd t}
  - 2u\frac{\dd}{\dd u} - \tfrac32\xi \frac{\dd}{\dd \xi}
  +\ldots %
  - \tfrac92 S \frac{\dd}{\dd S} + 5 Q \frac{\dd}{\dd Q}
  + \tfrac{11}{2}W_1\frac{\dd}{\dd W_1} + \tfrac72 W_2\frac{\dd}{\dd W_2}
\\
{}+ \tfrac32 W_3\frac{\dd}{\dd W_3}
  + 4 Q_2\frac{\dd}{\dd Q_2} + Q_3\frac{\dd}{\dd Q_3}
  + \tfrac72 V_1\frac{\dd}{\dd V_1} +\tfrac52 V_2\frac{\dd}{\dd V_2}
  + \tfrac32 V_3\frac{\dd}{\dd V_3} +\tfrac12 V_4\frac{\dd}{\dd V_4}
  - \tfrac12 V_5\frac{\dd}{\dd V_5}\\
{}  + 4O_1\frac{\dd}{\dd O_1} + 3O_2\frac{\dd}{\dd O_2}
  + 2O_3\frac{\dd}{\dd O_3} + O_4\frac{\dd}{\dd O_4}
  \Bigr).
\end{multline}
It is clear that vector field~\eqref{X5ord} is not $\tau$-\/verticalisable.
\end{example}

\begin{claim}\label{pClaim}%
In the covering~$\tau$ constructed in Example~\ref{Ex98}
for super\/-\/equation~\eqref{eq5ordF},
deformation equation~\eqref{eqFNXshadow} does not admit \emph{any} $\tau$-\/vertical solutions.
\end{claim}

\begin{proof}[Sketch of the proof] 
Equation~\eqref{eqFNXshadow}, viewed for this covering as a differential equation with respect to components of the vector field~$X$, is a system of %
linear inhomogeneous equations in total %
derivatives. 
The corresponding \emph{homogeneous} system describes symmetries of this covering. 
Therefore, the general solution~$X_{\text{gen}}$ of~\eqref{eqFNXshadow} has the form
$X_{\text{gen}} = X + X_{\text{sym}}$, where $X$~is given by~\eqref{X5ord} 
and a solution~$X_{\text{sym}}$ of the %
homogeneous system is a symmetry of the covering. 
The $\tau$-\/horizontal part of~\eqref{X5ord} is the scaling symmetry of~\eqref{eq5ordF}.
By a straightforward calculation we have established 
that the scaling symmetry of~\eqref{eq5ordF} cannot be lifted
to a symmetry of the covering~$\tau$. 
Consequently, all solutions of~\eqref{eqFNXshadow} %
have a nonzero $\tau$-\/horizontal part, hence none of them is $\tau$-\/verticalisable.
\end{proof}

The above claim and Corollary~\ref{corFNrem} combined yield that
the parameter~$\lambda$ in the covering in Example~\ref{Ex98} 
is not removable by gauge transformations,
and therefore, equation~\eqref{eq5ordF} is %
integrable.

\section*{Conclusion}\label{pConclusion}\noindent%
We extended --~to the $\BBZ_2$-graded case~-- Marvan's method for inspecting the 
(non)\/re\-mo\-va\-bi\-li\-ty of a parameter under the action of a smooth family of gauge transformations on a given family of zero\/-\/curvature representations. %
This generalisation of the standard technique can be used further for solution of %
Gardner's deformation problems for the $N{=}2$-\/supersymmetric %
KdV equations and other $\mathbb{Z}_2$-\/graded completely integrable systems.
At the same time, we confirmed that a switch %
between the representations of Lie (super-)\/algebras establishes 
a link between the two classes of nonlocal geometries and also between the arising differentials. %
In particular, by analysing %
this relation in the case of zero\/-\/curvature representations with removable parameters~$\lambda$, we explicitly described the equivalence classes of $\tau_\lambda$-\/shadows that determine, by virtue of 
structure equation~\eqref{eqFNXshadow}, the evolution of Cartan's structural elements in families of coverings~$\tau_\lambda$.

\smallskip
We remember that the technique for calculation of the horizontal gauge cohomology groups (see Lemma~\ref{lemZ2CoHom})
constitutes %
another result of the original papers~\cite{Marvan2002,Marvan2010}.
Namely, suppose that $\alpha_\lambda$~is a family, depending on a
parameter~$\lambda$, of $\fg$\/-\/valued zero-curvature representation for an equation~$\cE$. In~\cite{Marvan2002}, the horizontal gauge cohomology
complex~$\bar{H}^q_{\alpha_\lambda}(\cE,\fg)$ was associated\footnote{%
This cohomology theory %
is helpful %
for solution of a different %
problem, namely, \emph{construction} of %
parametric families of zero\/-\/curvature representations with nonremovable parameters (see~\cite{Marvan2010} for details).}
with such family~$\alpha_\lambda$. 
It is standard that the first horizontal gauge cohomology group~$\bar{H}^1_{\alpha_\lambda}(\cE,\fg)$ contains the obstructions to removability of the parameter~$\lambda$ (cf.\ section~\ref{secZCRCovering} above 
and~\cite{KKIgonin2003,JKIgonin2002}). 
However, calculating %
the cohomology group~$\bar{H}^1_{\alpha_\lambda}(\cEinf,\fg)$ is, in general, harder than solving equation~\eqref{EqToSolve} from Proposition~\ref{PropNonRemGrad} whenever the (non)\/removability of a given parameter~$\lambda$ is examined.

Marvan's technique for calculation of the first horizontal gauge cohomology group $\bar{H}^1_{\alpha_\lambda}(\cE,\fg)$ is based on finding 
$\fg\ltimes\fg$-\/valued zero\/-\/curvature representations for the system~$\cE$. 
Let us keep in mind that an efficient approach to finding zero\/-\/curvature representations~$\alpha$ for purely bosonic PDEs~$\cE$ was known from~\cite{Marvan1992,MarvanSL2}: it involves the use of such auxiliary structures as the characteristic elements~$\chi(\alpha)$ and then, consideration of the Jordan normal forms for the $\fg$-\/matrices contained therein.
Consequently, a proper $\BBZ_2$-\/graded generalisation of the concept of Jordan normal forms was indispensable, to make that method work in the larger set\/-\/up. Such generalisation has become available from the extended edition~\cite[\S\,D7.2]{Berezin2ed} of~\cite{Berezin}, see also~\cite{ShanderArXiv}.

\appendix
\section{Proof of Proposition~\protect{\ref{PropNonRemDas}}}\label{AppClaimZCRDas}\noindent%
To verify the claim, let us first introduce some helpful notation.
The number $k$ is called the differential order of a  differential  function $f(x,t,[u^j,\xi^l])$ with respect to the 
variable~$u^j$ if following conditions hold:
\begin{enumerate}
\item the function~$f$ essentially depends on the $k$th order derivative of~$u^j$ with respect to~$x$:
  \[ \frac{\dd f}{\dd u^j_\sigma} \neq 0, \qquad \sigma = (x\dots x),\   |\sigma| = k; \]
\item the function~$f$ does not depend on derivatives of~$u^j$ %
of order higher than $k$ with respect to~$x$:
\[ \forall p >k,\quad  \frac{\dd f}{\dd u^j_\sigma} = 0, \qquad \sigma = (x\dots x),\ |\sigma|= p.  \]
\end{enumerate}
We denote by $\dord_x^{u^j} (f)$  the differential order of a given function $f$ with respect to $u^j$. In the
same manner we define the differential order $\dord_{x}^{\xi^l}(f)$ of differential function $f(x,t, [u^j,\xi^l])$ with respect to the
odd variables~$\xi^l$.

For %
$N{=}2$, $a{=}4$-SKdV equation~\eqref{SKdVComponents} we have that ${u^1=u_0}$, ${u^2 = u_{12}}$, ${\xi^1=u_1}$, ${\xi^2=u_2}$. The maximum of
four numbers $\dord_x^{u_0} (f)$, $\dord_x^{u_1} (f)$, $\dord_x^{u_2} (f)$, and $\dord_x^{u_{12}} (f)$ is
called the \emph{differential order} of the function $f(x,t,[u_0,u_1,u_2,u_{12}])$, %
denoted by $\dord_x (f)$.
The maximum of $\dord_x (a_{ij})$, where $a_{ij}$'s are the entries of a given matrix $A$ with differential\/-\/functional coefficients, is %
the differential order of the matrix $A$; it is denoted by~$\dord_x (A)$.
Obviously, the following formulas hold:
\begin{align*}
0 \leqslant \dord_x (f + g) \leqslant  {}& \max\{ \dord (f), \dord (g)\},\\
0 \leqslant \dord_x (f\cdot g) \leqslant {}& \max\{ \dord (f), \dord (g)\},\\
\dord_x (\bar{D}_x f)  = {}& \begin{cases} 
  \dord_x (f) + 1 &\text{ if } f = f(x,t,[u_0,u_1,u_2,u_{12}]),\\
  0 &\text{ if } f = f(x,t).
\end{cases}
\end{align*}

Now, for the matrix~\eqref{eqDasA} we have that
\[
\dord_x A = 0, \qquad \dord_x \tfrac{\dd}{\dd \veps} A = 0.
\]
Let us calculate the maximal differential order of left-hand side of equation~\eqref{eqDasQA},
\[
 \dord_x\left(\tfrac{\dd}{\dd \veps} A  + \bbl A, Q \bbr \right)  \leqslant \dord_x (Q).
\]
The differential orders of the right-hand side and the left-hand side of equation~\eqref{eqDasQA}
must coincide. Therefore, we have that
\[
\dord_x (\bar{D}_x Q) = \dord_x (Q).
\]
This equality holds only in the case when all entries of the matrix $Q$ do not depend on $u_0$, $u_1$, $u_2$,
and $u_{12}$. This implies that the total derivative $\bar{D}_x$ in equation~\eqref{eqDasQA} amounts to the
partial derivative $%
{\dd}/{\dd x}$. We finally obtain the following system of equations for the entries~$q_{ij}$ of
the matrix~$Q$:
\begin{subequations}\label{eqQbigsys}
  \begin{align}
    \tfrac{\dd}{\dd x} q_{11} = {}& - u_{2}q_{31}\veps^{-1}  - \ii u_{1}q_{31}\veps^{-1}  + u_{0}^2q_{21}\veps^{-1}  
      - \ii u_{0} q_{21}\veps^{-2}  + u_{12} q_{21}\veps^{-1}  +  q_{12}\veps,\\
   \tfrac{\dd}{\dd x} q_{12} = {}&  - u_{2}(q_{32} + q_{13}\veps )\veps^{-1} + \ii u_{1}( - q_{32} + q_{13}\veps )\veps^{-1} 
      + u_{0}^2( - \veps q_{11} + \veps q_{22} - 1)\veps^{-2} \notag\\
      {} & {} + \ii u_{0} (\veps q_{11} - \veps q_{22} + 2)\veps^{-3} + u_{12} ( - \veps q_{11} + \veps q_{22} - 1)\veps^{-2} 
      + q_{12}\veps^{-1},\\
   \tfrac{\dd}{\dd x} q_{13} = {}& u_{2}( - \veps q_{22} + 1)\veps^{-2} + \ii u_{1}( - \veps q_{22} + 1 )\veps^{-2} 
      +  u_{0}^2q_{23}\veps^{-1}  + \ii u_{0} ( - q_{23} + q_{13}\veps^2)\veps^{-2} \notag \\
      {} & {} + u_{12} q_{23}\veps^{-1}  +   q_{13}\veps^{-1},\\
   \tfrac{\dd}{\dd x} q_{21} = {}& - q_{11}\veps +  q_{22}\veps - 1  - q_{21}\veps^{-1},\\
   \tfrac{\dd}{\dd x} q_{22} = {}&  - u_{2}q_{23} + \ii u_{1}q_{23} - u_{0}^2q_{21}\veps^{-1}  + \ii u_{0} q_{21}\veps^{-2} 
      - u_{12} q_{21}\veps^{-1} - \veps q_{12} + \veps^{-2},\\
   \tfrac{\dd}{\dd x} q_{23} = {}& q_{21}u_{2}\veps^{-1}  + \ii q_{21}u_{1}\veps^{-1}  + \ii u_{0} q_{23} - \veps q_{13},\\
   \tfrac{\dd}{\dd x} q_{31} = {}&  - q_{21}u_{2} + \ii q_{21}u_{1} - \ii u_{0} q_{31} + q_{32}\veps - q_{31}\veps ^{-1},\\
   \tfrac{\dd}{\dd x} q_{32} = {}& q_{11}u_{2} - \ii q_{11}u_{1} - u_{0}^2q_{31}\veps^{-1} 
      + \ii u_{0} ( - q_{32}\veps^2 +  q_{31})\veps^{-2} -  u_{12} q_{31} \veps^{-1}.
  \end{align}
\end{subequations} 
Since every $q_{ij}$ does not depend on $u_0$, $u_1$, $u_2$, and $u_{12}$, it follows that the coefficients of nonzero
powers of $u_0$, $u_1$, $u_2$, and $u_{12}$ in~\eqref{eqQbigsys} are equal to zero. We obtain the system
\begin{align}
  q_{31} &= 0, &
  q_{32} + q_{13}\veps &= 0, &
   - q_{32} + q_{13}\veps &= 0, \notag \\
  q_{23} & = 0 , \notag &
   - q_{11}\veps + q_{22}\veps - 1 &= 0, &
   - q_{32}\veps^2 + q_{31} & = 0 \notag.\\
   q_{11} \veps - q_{22}\veps + 2 &= 0 , &
   - q_{22}\veps + 1 &= 0, &
   q_{11} &= 0 , \label{eqQcontr}
\end{align}
By adding the last equation in the first column, 
$q_{11} \veps - q_{22}\veps + 2 = 0$, to the second equation in the other column,
$- q_{11}\veps + q_{22}\veps - 1 = 0$, we obtain the contradition $1=0$. Therefore, system~\eqref{eqQcontr} is not compatible.
This proves Proposition~\ref{PropNonRemDas}: %
there is no $\gsl(2| 1)$-\/matrix~$Q$ satisfying equations~\eqref{eqDasQ} at~$\veps>0$.

\section{Two descriptions of one elimination procedure: an example}\label{appParamNotes}\noindent%
We analyse the following tautological construction: %
by re\/-\/addressing Sasaki,\footnote{A parameter\/-\/dependent zero\/-\/curvature representation for Burgers' equation was considered %
in~\cite{ChernTenenblat86} in the same
context of pseudospherical surfaces as in Sasaki's paper~\cite{Sasaki79}.
We refer to~\cite{Marvan2002} for the ana\-ly\-sis of removability of the
parameter in that zero\/-\/curvature representation for Burgers' equation~\cite{ChernTenenblat86}.} see~\cite{Sasaki79}, we first track how the scaling
symmetry of KdV equation~\eqref{kdv} acts on its standard matrix Lax pair; 
on the other hand, we reveal how these objects are phrased %
in the language of coverings.

\subsection{The Sasaki construction: elimination of a nonremovable parameter}
Recall that the Korteweg\/--\/de Vries equation is
\begin{equation}\tag{\ref{kdv}}
\cE = \left\{ u_t = - u_{xxx} - 6uu_x \right\}.
\end{equation}
Consider the family of coverings $\tau_\eta \colon \tilde{\cE}_\eta \to \cE$ over it,
\begin{subequations}\label{eqSasakiCov}
\begin{align}
v_x &=  2v\eta - (v^2 + u),\\
v_t &= -8\eta^3 v + 4 \eta^2 (v^2 + u) + 2 \eta (-2 v u + u_x) + 2 v^2 u - 2 v u_x + 2 u^2 + u_{xx};
\end{align}
\end{subequations}
these formulas are obtained from the %
$\gsl_2$-\/valued zero\/-\/curvature representation (see~\cite{Sasaki79}),
\[%
  \alpha_\eta = \begin{pmatrix}
    \eta & u \\
    -1   & -\eta
  \end{pmatrix} \Id x 
  + \begin{pmatrix}
    - (4\eta^3 + 2\eta u + u_x) & - (u_{xx} + 2\eta u_x + 4\eta^2u + 2u^2) \\
    4\eta^2 + 2u                & 4\eta^3 + 2\eta u + u_x
  \end{pmatrix} \Id t.
\]%
Let us recall that the parameter~$\eta$ cannot be removed from the zero\/-\/curvature
representations $\alpha_\eta$ by using gauge transformations.
However, it can be \emph{eliminated} by using a wider class of transformations.
Namely, consider the scaling symmetry of equation~\eqref{kdv},
\[%
 x \mapsto \eta x, \quad t \mapsto  \eta^3 t, \quad u \mapsto  \eta^{-2}u, 
\qquad \eta\in\BBR.
\]%
Using it, one transforms the zero\/-\/curvature representation~$\alpha_\eta$ into
\[%
  \alpha'_\eta = \begin{pmatrix}
     1        & \eta u \\
    -\eta^{-1} & -1
  \end{pmatrix} \Id x 
  + \begin{pmatrix}
    - (4 + 2 u + u_x) & - \eta(u_{xx} + 2u_x + 4u + 2u^2) \\
    \eta^{-1}(4 + 2u) & 4 + 2u + u_x
  \end{pmatrix} \Id t.
\]%
The parameter~$\eta$ in~$\alpha'_\eta$ is removable under the gauge transformation
\[%
  g = \begin{pmatrix}
    \eta^{-1/2} & 0 \\
    0          & \eta^{1/2}
  \end{pmatrix} \in C^\infty(\cEinf, GL_2(\BBC)),
\]%
that is, we have that
$ (\alpha'_\eta)^g = \alpha'_\eta\bigr|_{\eta = 1} = \alpha_\eta\bigr|_{\eta = 1}$.

\subsection{How the elimination works in terms of the structure element}
Let us now address the removability of parameter~$\eta$ in coverings~\eqref{eqSasakiCov} 
in terms of the formalism of Cartan's structural element.

For a vector field
\[
X = a\otimes\frac{\dd}{\dd x}  + b\otimes\frac{\dd}{\dd t} 
+ \omega_\sigma\otimes\frac{\dd}{\dd u_\sigma}
+ \varphi\otimes\frac{\dd}{\dd v}, 
\]
the equation for evolution of Cartan's structural element,
\begin{equation}\tag{\ref{eqFNXshadow}}
\frac{\mathrm{d}}{\mathrm{d}\eta} U_\eta = \fnl X, U_\eta \fnr,
\end{equation}
splits into the system
\begin{subequations}\label{eqUcLong}
\begin{align}
-\frac{\mathrm{d}}{\mathrm{d}\eta} v_x = {} & \tilde{D}_x \varphi -
\varphi\frac{\dd v_x}{\dd v} - \omega_\sigma\frac{\dd v_x}{\dd u_\sigma} +
b\left(\frac{\dd v_x}{\dd u_\sigma}u_{\sigma t} + \frac{\dd v_x}{\dd v}v_t -
  \tilde{D}_xv_t\right) - v_t\frac{\dd b}{x}\notag\\
{}&{}+ a\left( - \tilde{D}_xv_x + \frac{\dd v_x}{\dd u_\sigma}u_{\sigma x} +
  \frac{\dd v_x}{\dd v}v_x\right) - v_x\frac{\dd a}{\dd x},\\
-\frac{\mathrm{d}}{\mathrm{d}\eta} v_t = {} & \tilde{D}_t \varphi -
\varphi\frac{\dd v_t}{\dd v} - \omega_\sigma\frac{\dd v_t}{\dd u_\sigma} +
b\left(\frac{\dd v_t}{\dd u_\sigma}u_{\sigma t} + \frac{\dd v_t}{\dd v}v_t -
  \tilde{D}_tv_t\right) - v_t\frac{\dd b}{\dd t}\notag\\
{}&{}+ a\left( - \tilde{D}_t v_x + \frac{\dd v_t}{\dd u_\sigma}u_{\sigma x} +
  \frac{\dd v_t}{\dd v}v_x\right) - v_x\frac{\dd a}{\dd t},\\
\omega_{\sigma x} = {} &  \tilde{D}_x \omega_\sigma 
 - u_{\sigma  t}\frac{\dd b}{\dd x} - u_{\sigma x}\frac{\dd a}{\dd x},\\
\omega_{\sigma t} = {} &  \tilde{D}_t \omega_\sigma 
 - u_{\sigma  t}\frac{\dd b}{\dd t} - u_{\sigma x}\frac{\dd a}{\dd t}.
\end{align}
\end{subequations}
Suppose now that the vector field is vertical:
$X^{\mathrm{v}} =  \omega^{\mathrm{v}}_\sigma\otimes {\dd}/{\dd u_\sigma} + \varphi^{\mathrm{v}}\otimes {\dd}/{\dd v}$.
This simplifies system~\eqref%
{eqUcLong}; it then becomes
\begin{subequations}\label{eqUcVert}
\begin{align}
-\frac{\mathrm{d}}{\mathrm{d}\eta} v_x = {} & \tilde{D}_x \varphi^{\mathrm{v}} -
\varphi^{\mathrm{v}}\frac{\dd v_x}{\dd v} - \omega^{\mathrm{v}}_\sigma\frac{\dd v_x}{\dd u_\sigma}, \\
-\frac{\mathrm{d}}{\mathrm{d}\eta} v_t = {} & \tilde{D}_t \varphi^{\mathrm{v}} -
\varphi^{\mathrm{v}}\frac{\dd v_t}{\dd v} - \omega^{\mathrm{v}}_\sigma\frac{\dd v_t}{\dd u_\sigma}, \\ 
\omega^{\mathrm{v}}_{\sigma x} = {} &  \tilde{D}_x \omega^{\mathrm{v}}_\sigma ,\\
\omega^{\mathrm{v}}_{\sigma t} = {} &  \tilde{D}_t \omega^{\mathrm{v}}_\sigma .
\end{align}
\end{subequations}
Let us use the Ansatz
\[
\omega^{\mathrm{v}} = \omega - au_x - bu_t, \quad \varphi^{\mathrm{v}} = \varphi - av_x -bu_t,
\]
assuming that 
$a=a(x, t, \eta)$,\ $b=b(x, t, \eta)$,\ $\varphi = \varphi(\eta,
u, v)$, and~$\omega = \omega(\eta, u, v, u_x, u_{xx})$. 
By construction, the unknowns $\omega^{\mathrm{v}}$ and $\varphi^{\mathrm{v}}$
satisfy system~\eqref{eqUcVert}. 
Using the analytic software \texttt{Jets}~\cite{MarvanJets} and
\texttt{Crack}~\cite{SsTools}, we find the solution 
\begin{align*}
a ={} & 24c_4t\eta^3 + 2c_4x\eta + \tfrac{1}{\eta}(c_6+x),\\
b ={} & 6c_4t\eta + \tfrac{1}{\eta}(-c_7 + 3t),\\
\omega = {} & 4c_4\eta^3 - 4c_4u\eta + u_xc_4 + \tfrac{1}{\eta}(-\tfrac{1}{2}u_xc_3 - 2u) + \tfrac{1}{2\eta^2}u_x,\\
\varphi ={} & 2c_4\eta^2 - c_4v^2 - c_3v - c_4u + \tfrac{c_3}{2\eta}(v^2 + u) - \tfrac{1}{2\eta^2} (v^2 + u),
\end{align*}
which contains four arbitrary constants $c_3$, $c_4$, $c_6$, and~$c_7$.

Let us set $c_3=0$, $c_4=-1/(2\eta^2)$ at $\eta\neq0$, $c_6=0$, and $c_7=0$.  This determines the solution
which corresponds to the lift of Galilean symmetry of~\eqref{kdv}:
\[%
  X_2 = -2\eta( 6t\,{\dd}/{\dd x}
    + \,{\dd}/{\dd u}
    + \dots)
    - {\dd}/{\dd v}. 
\]%
On the other hand, set $c_3=1/\eta$ if~$\eta\neq0$ and let $c_4=0$, $c_6=0$, and $c_7=0$.  This yields the
solution which corresponds to the lift of scaling symmetry of~\eqref{kdv}; namely, we have that
\begin{equation}\label{eqXScal}
  X_1 = \eta^{-2}(   - x\,{\partial}/{\partial x}
    - 3t\,{\partial}/{\partial t}
    + 2u  \,{\partial}/{\partial u}
    + \ldots
    + v  \,{\partial}/{\partial v} ). 
\end{equation}
The integral curves of vector field~\eqref{eqXScal} encode the transformation
\begin{equation}\label{eqSasakiScaling}
  x \mapsto \eta x, \quad
  t \mapsto \eta^3 t, \quad
  u \mapsto \eta^{-2} u, \quad
  v \mapsto \eta^{-1} v.
\end{equation}
Its action on the covering~$\tau_\eta$ in~\eqref{eqSasakiCov} results in the covering
$\tau' = \tau_\eta \bigr|_{\eta=1}$, which is described by the formulas
\begin{align*}
v_x &= 2v - (v^2 + u),\\
v_t &= -8 v + 4 v^2 + 4u -4 v u + 2u_x + 2 v^2 u - 2 v u_x + 2 u^2 + u_{xx}.
\end{align*}
It is readily seen 
that the covering~$\tau'$ is the image of zero\/-\/curvature 
representation~$(\alpha'_\eta)^g$ under a swapping of representations for the Lie algebra 
at hand. This is shown in the following diagram:
\begin{equation}\label{CDSasaki}
  \begin{CD}
    \alpha_\eta
    @>{\text{scaling}}>>
    \alpha'_\eta
    @>g>>
    \alpha'_\eta \bigr|_{\eta=1} \\
    @|        @.           @VV{\nabla}V \\
    \alpha_\eta
    @>{\nabla}>>
    \tau
    @>{\eqref{eqSasakiScaling}}>>
    \tau'.
  \end{CD}
\end{equation}
We conclude that the problem of finding transformations (which are possibly not gauge)
that eliminate the parameter in a given family of zero\/-\/curvature representations
can be approached via a solution of equation~\eqref{eqFNXshadow} in the family of coverings
which are the $(\rho\rightleftarrows\boldsymbol{\varrho})$-\/avatars of those zero\/-\/curvature 
representations.

\subsection{Overview: taxonomy of the parameters}
Depending on their elimination scenario, ``removable'' parameters in zero\/-\/curvature 
representations are classified as follows:
\begin{enumerate}
\item %
There are parameters which are truly removable under the action of smooth
families of gauge transformations
(see~\cite{Marvan2002,Marvan2010} by Marvan and~\cite{Sakovich95,Sakovich2004} by Sakovich).
\item There could be zero\/-\/curvature representations~$\alpha_\lambda$ which are (piecewise-)\/smooth in the parameter~$\lambda\in\mathcal{I}\subseteq\BBC$ but such that the families~$S_\lambda$ of gauge transformations removing the parameter are \emph{not} smooth at all points~$\lambda\in\mathcal{J}\subseteq\mathcal{I}$, where the set~$\mathcal{J}$ is
\begin{enumerate}
\item finite,
\item countable,
\item everywhere dense in~$%
\mathcal{I}$ but not amounting to it, or
\item equal to the entire set~$\mathcal{I}$ of admissible values of the parameter~$\lambda$.
\end{enumerate}
This analytic curiosity would be the threshold limit of the preceding case.
\item Next, there are parameters which cannot be removed by using gauge transformations 
but which indicate the presence of conserved currents in zero\/-\/curvature representations
and the reducibility of such representations,\footnote{For example, consider
a ``fake'' $\gsl_2$-\/valued zero\/-\/curvature representation
$\alpha
    = \begin{pmatrix}
      0 & X_1 + \lambda X_2 \\
      0 & 0
    \end{pmatrix}\Id x 
    + \begin{pmatrix}
      0 & T_1 + \lambda T_2 \\
      0 & 0
    \end{pmatrix} \Id t$ 
for an equation $\cE$ possessing two conserved currents
$\bar{D}_t X_i = \bar{D}_x T_i$, here~$i=1,2$.} %
(see~\cite{MarvanRed} and~\cite[\S\,12]{GDE2012}).
\item %
There are parameters which vanish under the action of those
symmetries of the underlying differential equation which cannot be lifted to
the covering Maurer\/--\/Cartan equation (see~\cite{Sasaki79,LeviSymTu}).
\item Finally, there are parameters which can be eliminated by the same procedure as 
in the %
preceding case but by using \emph{shadows} of nonlocal symmetries in some 
auxiliary covering over the equation at hand (namely, \emph{not} in the covering 
which grasps the ZCR geometry but in an extension of the equation's geometry by 
a set of ``nonlocalities''),
see~\cite{Cieslinski93Nonlocal,Cieslinski93Group,Cieslinski94}.
\end{enumerate}

\subsubsection*{Acknowledgements}
The authors are grateful to 
I.~S.~Krasil'shchik,
D.~A.~Leites,
M.~Marvan,
M.~A.~Nesterenko, 
P.~J.~Olver,
W.~M.~Seiler, and 
A.~M.~Verbovetsky for helpful correspondence and constructive criticisms. 
The authors thank P.~Mathieu for his attention to this work;
the authors are grateful to the anonymous referees for remarks %
and advice.

This research was done in part while the first author was visiting at the MPIM (Bonn) and the second author
was visiting at Utrecht University and New York University Abu Dhabi; the hospitality and support of these institutions are gratefully
acknowledged. The research of the first author was partially supported by JBI~RUG project~106552 (Groningen);
the second author was supported by ISPU scholarship for young scientists and WCMCS post-doctoral fellowship.

\end{document}